\newtheorem{definition}{Definition}[section]
\newtheorem{theorem}{Theorem}[section]
\newtheorem{lemma}{Lemma}[section]
\newtheorem{remark}{Remark}[section]
\newtheorem*{maintheorem*}{Main Theorem}
\numberwithin{equation}{section}
\newcommand{\norm}[1]{\left\| #1 \right\|}
\newcommand{\eps}{\varepsilon}
\newcommand{\eb}{{\eps,\beta}}
\newcommand{\ueb}{u_\eb}
\newcommand{\ue}{u_\eps}
\newcommand{\Peb}{P_{\eps,\beta}}
\newcommand{\Feb}{F_{\eps,\beta}}
\newcommand{\Pe}{P_\eps}
\newcommand{\pt}{\partial_t}
\newcommand{\px}{\partial_x }
\newcommand{\pxx}{\partial_{xx}^2}
\newcommand{\pxxx}{\partial_{xxx}^3}
\newcommand{\pxxxx}{\partial_{xxxx}^4}
\newcommand{\ptx}{\partial_{tx}^2}
\renewcommand{\i}{\ifmmode\mathit{\mathchar"7010 }\else\char"10 \fi}
\renewcommand{\j}{\ifmmode\mathit{\mathchar"7011 }\else\char"11 \fi}
\newcommand{\R}{\mathbb{R}}
\newcommand{\N}{\mathbb{N}}
\begin{document}\large

\title[The Ostrovsky Equation]{Convergence of the Ostrovsky Equation \\ to the  Ostrovsky--Hunter One}

\author[G. M. Coclite and L. di Ruvo]{Giuseppe Maria Coclite and Lorenzo di Ruvo}
\address[Giuseppe Maria Coclite and Lorenzo di Ruvo]
{\newline Department of Mathematics,   University of Bari, via E. Orabona 4, 70125 Bari,   Italy} 
\email[]{giuseppemaria.coclite@uniba.it, lorenzo.diruvo@uniba.it}
\urladdr{http://www.dm.uniba.it/Members/coclitegm/}

\date{\today}

\keywords{Singular limit, compensated compactness, Ostrovsky-Hunter equation, Entropy condition.}

\subjclass[2000]{35G25, 35L65, 35L05}


\thanks{The authors would like to thank Prof. Kenneth Hvistendahl Karlsen for many useful discussions.}

\begin{abstract}
We consider the Ostrovsky equation, which contains nonlinear dispersive effects. We prove
that as the diffusion parameter tend to zero, the solutions of the dispersive equation converge to discontinuous weak solutions of the Ostrovsky-Hunter equation.
The proof relies on deriving suitable a priori estimates together with an application of the compensated compactness method in the $L^p$ setting.
\end{abstract}

\maketitle


\section{Introduction}
\label{sec:intro}

The nonlinear evolution equation
\begin{equation}
\label{eq:OHbeta}
\px(\pt u+u\px u-\beta \pxxx u)=\gamma u,
\end{equation}
with $\beta$ and $\gamma \in \R$ was derived by Ostrovsky \cite{O} to model small-amplitude long
waves in a rotating fluid of a finite depth. This equation generalizes the
Korteweg-deVries equation (that corresponds to $\gamma=0$) by the additional term induced by
the Coriolis force.  Mathematical properties of the Ostrovsky equation \eqref{eq:OHbeta} were studied
recently in many details, including the local and global well-posedness in energy space
\cite{GL, LM, LV, T}, stability of solitary waves \cite{LL, L, LV:JDE}, and convergence of solutions in the
limit of the Korteweg-deVries equation \cite{LL:07, LV:JDE}.
We rewrite \eqref{eq:OHbeta} in the following way
\begin{equation}
\label{eq:OHw-u}
\begin{cases}
\pt u+ u\px u- \beta \pxxx u=\gamma \int^{x}_{0} u(t,y) dy,&\qquad t>0, \ x\in\R,\\
u(0,x)=u_0(x), &\qquad x\in\R,
\end{cases}
\end{equation}
or equivalently,
\begin{equation}
\label{eq:OHw}
\begin{cases}
\pt u+ u\px u - \beta\pxxx u=\gamma P,&\qquad t>0, \ x\in\R ,\\
\px P=u,&\qquad t>0, \ x\in\R,\\
 P(t,0)=0,& \qquad t>0,\\
u(0,x)=u_0(x), &\qquad x\in\R.
\end{cases}
\end{equation}

We are interested in the no high frequency limit,  we send $\beta\to 0$ in \eqref{eq:OHbeta}. In this way we pass from \eqref{eq:OHbeta} to the equation
\begin{equation}
\label{eq:OH}
\px(\pt u+u\px u)=\gamma u,\qquad t>0, \quad x\in\R.
\end{equation}
The equation
\eqref{eq:OH} is known under different names such as the reduced Ostrovsky equation \cite{P, S}, the
Ostrovsky-Hunter equation \cite{B}, the short-wave equation \cite{H}, and the Vakhnenko equation
\cite{MPV, PV}. Integrating \eqref{eq:OH} with respect to $x$ we gain the integro-differential formulation of  \eqref{eq:OH} (see \cite{LPS})
\begin{equation*}
\pt u+\px f(u)=\gamma \int^x u(t,y) dy,
\end{equation*}
that is equivalent to
\begin{equation*}
\pt u+\px f(u)=\gamma P,\qquad \px P=u.
\end{equation*}

On the initial datum, we assume that
\begin{equation}
\label{eq:assinit}
u_0\in L^2(\R)\cap L^{4}(\R),\quad\int_{\R}u_{0}(x)dx=0,
\end{equation}
and on the function
\begin{equation}
\label{eq:def-di-P0}
P_{0}(x)=\int_{-\infty}^{x} u_{0}(y)dy, \quad x\in\R,
\end{equation}
we assume that
\begin{equation}
\label{eq:L-2P0}
\begin{split}
\norm{P_0}^2_{L^2(\R)}&=\int_{\R}\left(\int_{-\infty}^{x}u_{0}(y)dy\right)^2dx <\infty,\\
\int_{\R}P_0(x)dx&= \int_{\R}\left(\int_{-\infty}^{x}u_{0}(y)dy\right)dx=0.
\end{split}
\end{equation}

\begin{definition}
\label{def:sol}
We say that $u\in  L^{\infty}((0,T)\times\R),\,T>0,$ is an entropy solution of the initial
value problem \eqref{eq:OHw} if
\begin{itemize}
\item[$i$)] $u$ is a distributional solution of \eqref{eq:OHw};
\item[$ii$)] for every convex function $\eta\in  C^2(\R)$ the
entropy inequality
\begin{equation}
\label{eq:OHentropy}
\pt \eta(u)+ \px q(u)-\gamma\eta'(u) P\le 0, \qquad     q(u)=\int^u f'(\xi) \eta'(\xi)\, d\xi,
\end{equation}
holds in the sense of distributions in $(0,\infty)\times\R$.
\end{itemize}
\end{definition}
In \cite{CdK, CdRK, dR}, it is proved that \eqref{eq:OHw} has an entropy solution in the sense of Definition \ref{def:sol}. Moreover, it is unique and stable.

We study the dispersion-diffusion for \eqref{eq:OH}. Therefore, we consider the following third order approximation
\begin{equation}
\label{eq:OHepsw}
\begin{cases}
\pt \ueb+\ueb\px \ueb -\beta\pxxx\ueb=\gamma\Peb+ \eps\pxx\ueb,&\quad t>0,\ x\in\R,\\
\px\Peb=\ueb,&\quad t>0,\ x\in\R,\\
\Peb(t,0)=0,&\quad t>0,\\
\ueb(0,x)=u_{\eps,\beta,0}(x),&\quad x\in\R,
\end{cases}
\end{equation}
or equivalently,
\begin{equation}
\label{eq:OHepswint}
\begin{cases}
\pt \ueb+\ueb\px \ueb - \beta\pxxx\ueb\\
\qquad=\gamma\int_{0}^x \ueb (t,y)dy+ \eps\pxx\ueb,&\quad t>0,\ x\in\R ,\\
\ueb(0,x)=u_{\eps, \beta, 0}(x),&\quad x\in\R,
\end{cases}
\end{equation}
where $u_{\eps,\beta,0}$ is a $C^\infty$ approximation of $u_{0}$ such that
\begin{equation}
\begin{split}
\label{eq:u0eps}
&\norm{u_{\eps,\beta, 0}}^2_{L^2(\R)}+ \norm{u_{\eps,\beta, 0}}^4_{L^4(\R)}+(\beta+ \eps^2) \norm{\px u_{\eps,\beta,0}}^2_{L^2(\R)}\\
&\qquad\qquad\quad\quad +\beta^2\norm{\pxx u_{\eps,\beta, 0}}^2_{L^2(\R)}\le C_{0}, \quad \eps,\beta >0,\\
& \int_{\R} u_{\eps,\beta,0}(x) dx =0,\quad \beta\int_{\R}\ueb(\px\ueb)^2 dx \leq C_{0}, \quad \eps,\beta >0,\\
& \int_{\R}\left(\int_{-\infty}^{x}u_{\eps, \beta, 0}(y)dy\right) dx=\int_{\R}P_{0,\eps, \beta}(x)dx=0, \quad \eps,\beta>0,\\
&\int_{\R}\left(\int_{-\infty}^{x}u_{\eps, \beta, 0}(y)dy\right)^2 dx\le \norm{P_0}^2_{L^2(\R)} \quad \eps,\beta >0,
\end{split}
\end{equation}
where $C_0$ is a constant independent on $\eps$ and $\beta$.\\

The main result of this paper is the following theorem.

\begin{theorem}
\label{th:main}
Assume that \eqref{eq:assinit}, \eqref{eq:def-di-P0}, \eqref{eq:L-2P0}, and \eqref{eq:u0eps} hold.
If
\begin{equation}
\label{eq:beta-eps}
\beta=\mathbf{\mathcal{O}}(\eps^2),
\end{equation}
then, there exist two sequences $\{\eps_{n}\}_{n\in\N}$, $\{\beta_{n}\}_{n\in\N}$, with $\eps_n, \beta_n \to 0$, and two limit functions
\begin{align*}
&u\in L^{\infty}(0,T; L^4(\R)\cap L^2(\R)), \quad T>0,\\
&P\in L^{\infty}((0,T)\times\R)\cap L^{2}((0,T)\times\R), \quad T>0,
\end{align*}
such that
\begin{itemize}
\item[$i)$] $u_{\eps_n, \beta_n}\to u$ strongly in $L^{p}_{loc}((0,T)\times\R)$, for each $1\le p <4$, $T>0$,
\item[$ii)$] $P_{\eps_n, \beta_n}\to P$ strongly in $L^{\infty}((0,T)\times\R)\cap L^{2}((0,T)\times\R)$, $T>0$,
\end{itemize}
where $u$ is a distributional solution of \eqref{eq:OHw}. Moreover, if
\begin{equation}
\label{eq:beta-eps-1}
\beta=o(\eps^2),
\end{equation}
then,
\begin{itemize}
\item[$iii)$] $u$ is  the unique entropy solution of \eqref{eq:OHw},
\item[$iv)$] $P_{\eps_n, \beta_n}\to P$ strongly in $L^{p}_{loc}(0,T;W^{1,\infty}(\R)\cap H^{1}(\R))$, for each $1\le p <4$, $T>0$.
\end{itemize}
\end{theorem}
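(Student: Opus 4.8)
The plan is to run the standard dispersion--diffusion singular-limit scheme: derive parameter-uniform a priori bounds on the smooth solutions $\ueb$ of \eqref{eq:OHepsw}, get strong compactness of $\Peb$ by soft arguments, obtain strong compactness of $\ueb$ through the $L^p$ compensated-compactness method, and finally pass to the limit in the distributional and entropy formulations. First I would build a hierarchy of estimates uniform in $\eps,\beta$. Multiplying the equation by $\ueb$ and integrating over $\R$, the flux term is exact, the source $\gamma\Peb$ is controlled through $\px\Peb=\ueb$ and the zero-mean normalisation in \eqref{eq:u0eps}, the dispersive term integrates out, and the diffusion supplies the dissipation, giving
\[
\norm{\ueb(t,\cdot)}^2_{L^2(\R)}+\norm{\Peb(t,\cdot)}^2_{L^2(\R)}+2\eps\int_0^t\norm{\px\ueb}^2_{L^2(\R)}\,ds\le C(T).
\]
Testing against $\ueb^3$ yields the uniform $L^\infty(0,T;L^4(\R))$ bound, and the higher-order combinations prepared by the weighted data bounds $(\beta+\eps^2)\norm{\px u_{\eps,\beta,0}}^2$, $\beta^2\norm{\pxx u_{\eps,\beta,0}}^2$ in \eqref{eq:u0eps} control $\sqrt\beta\,\px\ueb$ and $\beta\,\pxx\ueb$ in $L^2$. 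This is exactly where \eqref{eq:beta-eps}, $\beta=\mathcal{O}(\eps^2)$, enters: it ties the dispersive weights to the diffusive dissipation so the whole hierarchy closes uniformly.

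I would then treat the potential. Since $\px\Peb=\ueb$ is bounded in $L^\infty(0,T;L^2\cap L^4)$ and $\pt\Peb$ is controlled by integrating the equation in $x$, the family $\{\Peb\}$ is bounded in $L^\infty(0,T;H^1\cap W^{1,\infty})$ with equicontinuity in $t$; an Aubin--Lions/Ascoli argument then gives $\Peb\to P$ strongly, which is claim (ii). This part is comparatively soft once the estimates are in hand.

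The crux, and the \emph{main obstacle}, is the strong convergence of $\ueb$ in (i). For a smooth entropy pair $(\eta,q)$ with $q'=f'\eta'$, I would expand $\pt\eta(\ueb)+\px q(\ueb)$ using \eqref{eq:OHepsw}. It splits into the source $\gamma\eta'(\ueb)\Peb$ (bounded, indeed convergent, in $L^2_{\loc}$); the diffusion $\eps\px(\eta'(\ueb)\px\ueb)-\eps\eta''(\ueb)(\px\ueb)^2$, whose first term tends to $0$ in $\Hneg$ because $\eps\px\ueb\to0$ in $L^2$ and whose second term is bounded in $\CMloc$; and the dispersive terms from $-\beta\pxxx\ueb$, which after integration by parts become $\px$ of quantities built from $\beta\eta'(\ueb)\pxx\ueb$ and $\beta\eta''(\ueb)(\px\ueb)^2$ plus a lower-order cubic. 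These are precisely the quantities controlled above, so under $\beta=\mathcal{O}(\eps^2)$ the dispersive production lands in a compact-in-$\Hneg$ plus bounded-in-$\CMloc$ class. Murat's lemma (a sequence bounded in $W^{-1,p}_{\loc}$, $p>2$, lying in the sum of a compact-in-$\Hneg$ set and a bounded-in-$\CMloc$ set is compact in $\Hneg$) then gives $\Hneg$-compactness of all entropy productions, and the Tartar--Murat Young-measure representation forces the Young measure of $\{\ueb\}$ to reduce to a Dirac mass a.e., i.e. strong $L^p_{\loc}$ convergence for $1\le p<4$. Handling the dispersive terms at this level is the key difficulty.

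With $\ueb\to u$ and $\Peb\to P$ strongly, passing to the limit in the distributional form of \eqref{eq:OHepsw} is direct, since the $\eps$- and $\beta$-terms vanish; hence $u$ solves \eqref{eq:OHw} in the sense of distributions, completing the $\mathcal{O}(\eps^2)$ regime. For (iii)--(iv), under the sharper hypothesis $\beta=o(\eps^2)$ of \eqref{eq:beta-eps-1} I would revisit the entropy balance for convex $\eta$: the dissipative term $-\eps\eta''(\ueb)(\px\ueb)^2\le0$ retains the correct sign, while the $o(\eps^2)$ scaling now forces the dispersive entropy production to vanish in the limit rather than merely stay bounded. The limiting inequality is then exactly \eqref{eq:OHentropy}, so $u$ is the entropy solution, unique by the cited results; the improved convergence of $\Peb$ in (iv) follows by upgrading the potential estimates using the now-strong convergence of $u$.
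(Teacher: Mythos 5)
Your overall architecture coincides with the paper's (energy estimates, a uniform $L^4$ bound, Murat's lemma plus Schonbek's $L^p$ compensated compactness, and the entropy inequality under $\beta=o(\eps^2)$), but the technical core of the paper lies precisely in the steps you treat as routine, and two of your assertions there are not correct as stated. The main gap is the $L^\infty(0,T;L^4(\R))$ bound. Testing \eqref{eq:OHepsw} against $\ueb^3$ alone does \emph{not} close: the dispersive term contributes $-\beta\int_\R \ueb^3\pxxx\ueb\,dx=3\beta\int_\R \ueb^2\px\ueb\pxx\ueb\,dx$, which is not controlled by the $L^2$ energy estimate or by the weighted data bounds in \eqref{eq:u0eps}. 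The paper instead multiplies by the fourth-order combination $\ueb^3-3\beta(\px\ueb)^2-6\beta\ueb\pxx\ueb+\tfrac{18}{5}\beta^2\pxxxx\ueb$, chosen so that all such cubic dispersive terms cancel algebraically, and even then it needs the intermediate interpolation bound $\norm{\ueb}_{L^\infty(I_T)}\le C(T)\beta^{-1/3}$ (obtained from a separate estimate on $\beta\norm{\px\ueb}^2_{L^2}$ and a quartic inequality) together with $\beta=\mathcal{O}(\eps^2)$ to absorb terms like $\beta\norm{\ueb}_{L^\infty}\int_0^t\norm{\px\ueb}^2_{L^2}\,ds\le C\beta^{2/3}/\eps$. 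Your claim that the data bounds control $\sqrt{\beta}\,\px\ueb$ in $L^2$ uniformly is also false: the paper only obtains $\beta\norm{\px\ueb(t,\cdot)}^2_{L^2(\R)}\le C(T)\beta^{-1/3}$, and the quantities actually fed into the compensated-compactness argument are $\beta\pxx\ueb$ in $L^2$ (with $\beta^2\int_0^T\norm{\pxx\ueb}^2\,dt\le C(T)\eps$, so that $\px(\beta\eta'(\ueb)\pxx\ueb)\to 0$ in $H^{-1}$) and $\beta\px\ueb\pxx\ueb$ in $L^1$; establishing these requires the further multipliers of Lemmas \ref{lm:bounded} and \ref{lm:501}, none of which appear in your outline.

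A second, smaller error: $\{\Peb\}$ is \emph{not} bounded in $L^\infty(0,T;W^{1,\infty}(\R))$ uniformly in $\beta$, since $\px\Peb=\ueb$ and the only uniform sup-norm bound available is $\norm{\ueb}_{L^\infty(I_T)}\le C(T)\beta^{-1/3}$, which blows up. The convergence in $W^{1,\infty}$ claimed in item $iv)$ is a consequence of the strong $L^p_{\loc}$ convergence of $u_{\eps_n,\beta_n}$ obtained \emph{after} compensated compactness (and only in the $\beta=o(\eps^2)$ regime), not an a priori compactness input. Likewise the $L^\infty$ and $L^2$ bounds on $\Peb$ are not immediate from the zero-mean normalisation; the paper derives them from a coupled inequality $\norm{\Peb}^4_{L^\infty(I_T)}\le A(T)\norm{\Peb}_{L^\infty(I_T)}+B$ whose resolution is a separate (if elementary) step. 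The remaining parts of your proposal --- the decomposition of the entropy production into an $H^{-1}$-compact part and a measure-bounded part, the application of Murat's lemma, and the observation that $\beta=o(\eps^2)$ forces the dispersive entropy production $\beta\eta''(\ueb)\px\ueb\pxx\ueb$ to vanish in $L^1$ so that the entropy inequality survives in the limit --- do match the paper, but they rest entirely on the estimates whose derivation is the missing content.
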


The paper is organized in two sections. In Section \ref{sec:vv}, we prove some priori estimates, while in Section \ref{sec:theor} we prove Theorem \ref{th:main}.

\section{A priori Estimates}
\label{sec:vv}
Let us prove some a priori estimates on $\ueb$, denoting with $C_0$ the constants which depend only on the initial data, and with $C(T)$, the constants which depend also on $T$.
\begin{lemma}

\label{lm:cns}
Let us suppose that
\begin{equation}
\label{eq:Pu}
\Peb(t,-\infty)=0, \quad t\ge 0,\quad (\textrm{or} \quad \Peb(t,\infty)=0),
\end{equation}
where $\Peb(t,x)$ is defined in \eqref{eq:OHepsw}. Then, the following statements are equivalent
\begin{align}
\label{eq:con-u}
\int_{\R} \ueb(t,x) dx &=0, \quad t\ge 0,\\
\label{eq:120}
\frac{d}{dt}\int_{\R} \ueb^2dx + 2\eps\int_{\R}
(\px\ueb)^2 dx&=0,\quad t>0.
\end{align}
\end{lemma}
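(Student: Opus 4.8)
The plan is to reduce the entire statement to a single energy identity obtained by testing the evolution equation against $\ueb$, and then to read off the equivalence from the right-hand side. First I would record what hypothesis \eqref{eq:Pu} buys us: since $\px\Peb=\ueb$, integrating subject to $\Peb(t,-\infty)=0$ gives $\Peb(t,x)=\int_{-\infty}^{x}\ueb(t,y)\,dy$, whence $\Peb(t,\infty)=\int_\R\ueb(t,x)\,dx$. Thus \eqref{eq:con-u} is exactly the assertion that $\Peb$ also vanishes at the other end, and the ``or'' alternative in \eqref{eq:Pu} is handled symmetrically with $\Peb(t,x)=-\int_x^{\infty}\ueb(t,y)\,dy$.

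Next I would multiply the first line of \eqref{eq:OHepsw} by $\ueb$ and integrate over $\R$, using the smoothness and spatial decay of $\ueb$ so that all boundary terms at $x=\pm\infty$ drop. The time term gives $\tfrac12\frac{d}{dt}\int_\R\ueb^2\,dx$; the convective term is a perfect derivative, $\int_\R\ueb^2\px\ueb\,dx=0$; the dispersive term integrates away, $-\beta\int_\R\ueb\pxxx\ueb\,dx=\beta\int_\R\px\ueb\,\pxx\ueb\,dx=0$; and the diffusive term becomes $\eps\int_\R\ueb\pxx\ueb\,dx=-\eps\int_\R(\px\ueb)^2\,dx$. The one term that is not manifestly signed is the Coriolis term, and here $\ueb=\px\Peb$ is the decisive input:
\[
\gamma\int_\R\ueb\Peb\,dx=\gamma\int_\R\Peb\,\px\Peb\,dx=\frac{\gamma}{2}\bigl[\Peb^2\bigr]_{-\infty}^{\infty}=\frac{\gamma}{2}\left(\int_\R\ueb(t,x)\,dx\right)^2,
\]
where the last equality uses \eqref{eq:Pu} (the alternative case produces the same expression up to the sign of $\gamma$).

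Collecting the terms yields the master identity
\[
\frac{d}{dt}\int_\R\ueb^2\,dx+2\eps\int_\R(\px\ueb)^2\,dx=\gamma\left(\int_\R\ueb(t,x)\,dx\right)^2,\qquad t>0,
\]
from which both implications are immediate. If \eqref{eq:con-u} holds, the right-hand side vanishes for every $t>0$ and \eqref{eq:120} follows; conversely, if \eqref{eq:120} holds, then $\gamma\bigl(\int_\R\ueb\,dx\bigr)^2\equiv 0$, so, since $\gamma\neq 0$, necessarily $\int_\R\ueb(t,x)\,dx=0$, which is \eqref{eq:con-u}.

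The only genuinely nonroutine point is the evaluation of the nonlocal Coriolis term: one must rewrite $\int_\R\ueb\Peb\,dx$ as a boundary contribution through $\ueb=\px\Peb$ and then invoke \eqref{eq:Pu} to express it via $\int_\R\ueb\,dx$. Everything else is standard integration by parts, the main care being to justify discarding the boundary terms at $\pm\infty$ in the convective and dispersive contributions; this rests on the a priori integrability and decay of $\ueb,\px\ueb,\pxx\ueb$ ensured by the smooth data \eqref{eq:u0eps}, and should be stated explicitly.
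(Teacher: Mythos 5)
Your proposal is correct and follows essentially the same route as the paper: both multiply the equation by $\ueb$, integrate by parts to kill the convective, dispersive, and (after one integration) diffusive boundary terms, and reduce the nonlocal term to the boundary value $\tfrac{\gamma}{2}\Peb^2(t,\infty)$, yielding the identity $\frac{d}{dt}\int_\R\ueb^2\,dx+2\eps\int_\R(\px\ueb)^2\,dx=\gamma\Peb^2(t,\infty)$ from which both implications are read off (the paper phrases the converse as a contradiction, you state it directly, and you are right to flag the implicit use of $\gamma\neq 0$, which the paper also relies on silently).
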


\begin{proof}
Let $t>0$. We begin by proving that \eqref{eq:con-u} implies
\eqref{eq:120}. \\
Multiplying \eqref{eq:OHepswint} by $\ueb$, an integration over $\R$ gives
\begin{align*}
\frac{1}{2}\frac{d}{dt}\int_{\R} \ueb^2dx=&\int_{\R} \ueb\pt\ueb dx\\
=&\eps\int_{\R}\ueb\pxx\ueb dx-\int_{\R} \ueb \ueb\px\ueb dx\\
&+\gamma\int_{\R}\ueb\Big(\int_{0}^{x}\ueb dy\Big)dx+\beta\int_{\R} \pxxx\ueb\ueb dx\\
= & - \eps\int_{\R} (\px\ueb)^2 dx + \gamma\int_{\R}\ueb\Big(\int_{0}^{x}\ueb dy\Big)dx  -\beta\int_{\R}\pxx\ueb\px\ueb dx\\
=&- \eps\int_{\R} (\px\ueb)^2 dx + \gamma\int_{\R}\ueb\Big(\int_{0}^{x}\ueb dy\Big)dx.
\end{align*}
For \eqref{eq:OHepsw} and \eqref{eq:Pu},
\begin{align*}
\int_{\R}\ueb\Big(\int_{0}^{x}\ueb dy\Big)dx=&\int_{\R}\Peb(t,x)\px\Peb(t,x)dx\\
=&\frac{1}{2}(\Peb^2(t,\infty)-\Peb ^2(t,-\infty))
=\frac{1}{2}\Peb^2(t,\infty).
\end{align*}
Then,
\begin{equation}
\label{eq:121}
\frac{d}{dt}\int_{\R} \ueb^2dx + 2\eps\int_{\R} (\px\ueb)^2 dx=\gamma\Peb ^2(t,\infty).
\end{equation}
Again by \eqref{eq:OHepsw},
\begin{equation*}
\Peb(t,\infty)=\int_{0}^{\infty}\ueb(t,x)dx.
\end{equation*}
Therefore, \eqref{eq:Pu} and \eqref{eq:con-u} give
\begin{equation}
\label{eq:122}
\Peb(t,\infty)=\Peb(t,-\infty)=0.
\end{equation}
\eqref{eq:120} follows from \eqref{eq:121}, and \eqref{eq:122}.

Let us show that \eqref{eq:120} implies \eqref{eq:con-u}. 
We assume by contradiction that \eqref{eq:con-u} does not hold, namely:
\begin{equation*}
\int_{\R} \ueb(t,x) dx \neq 0.
\end{equation*}
For \eqref{eq:OHepsw}, and \eqref{eq:Pu},
\begin{align*}
\Peb(t,\infty)=\int_{0}^{\infty} \ueb(t,x) dx\neq \int_{0}^{-\infty}
\ueb(t,x) dx = \Peb(t, -\infty)= 0.
\end{align*}
Then,
\begin{equation*}
\Peb^2(t,\infty)=\Big(\int_{0}^{\infty} \ueb(t,x) dx\Big)^2\neq 0.
\end{equation*}
Therefore, \eqref{eq:121} gives
\begin{equation*}
\frac{d}{dt}\int_{\R} \ueb^2dx + 2\eps\int_{\R}
(\px\ueb)^2 dx\neq 0,
\end{equation*}
which is in contradiction with \eqref{eq:120}.
\end{proof}

\begin{lemma}
\label{lm:l2-u}
For each $t \ge 0$, \eqref{eq:con-u} holds true, and
\begin{equation}
\begin{split}
\label{eq:P0}
\Peb(t,\infty)&=\int_{0}^{-\infty}\ueb(t,x)dx=0,\\
\Peb(t,-\infty)&=\int_{0}^{\infty}\ueb(t,x) =0.
\end{split}
\end{equation}
In particular, we have that
\begin{equation}
\label{eq:l2-u}
\norm{\ueb(t,\cdot)}_{L^2(\R)}^2+2\eps \int_0^t \norm{\px \ueb(s,\cdot)}^2_{L^2(\R)}ds\le C_0.
\end{equation}
\end{lemma}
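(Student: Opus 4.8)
\emph{Strategy.} The backbone of the argument is the identity \eqref{eq:121} produced in the proof of Lemma \ref{lm:cns},
\begin{equation*}
\frac{d}{dt}\int_{\R}\ueb^2\dx+2\eps\int_{\R}(\px\ueb)^2\dx=\gamma\Peb^2(t,\infty),
\end{equation*}
valid once the hypothesis \eqref{eq:Pu} is available. My plan is to reduce the entire lemma to the single fact that $\Peb(t,\pm\infty)=0$. Indeed, \eqref{eq:P0} is precisely the vanishing of these two boundary limits; it verifies \eqref{eq:Pu}, makes the right-hand side above equal to zero, and hence turns the identity into \eqref{eq:120}; and \eqref{eq:con-u} follows from $\int_{\R}\ueb\dx=\Peb(t,\infty)-\Peb(t,-\infty)=0$. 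After that, the quantitative estimate \eqref{eq:l2-u} is just one time integration.

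\emph{The key step.} I would extract the decay of $\Peb$ directly from the equation rather than from a separate energy estimate. Rewriting \eqref{eq:OHepswint} as
\begin{equation*}
\gamma\Peb=\pt\ueb+\ueb\px\ueb-\beta\pxxx\ueb-\eps\pxx\ueb,
\end{equation*}
and using that for $\eps>0$ the viscous problem \eqref{eq:OHepsw} has a smooth solution with $\ueb(t,\cdot)\in H^3(\R)$ and $\pt\ueb(t,\cdot)\in L^2(\R)$, every term on the right-hand side lies in $L^2(\R)$: one has $\pt\ueb,\pxx\ueb,\pxxx\ueb\in L^2(\R)$, while $\ueb\px\ueb\in L^2(\R)$ because $\ueb\in H^3(\R)\subset L^\infty(\R)$ and $\px\ueb\in L^2(\R)$. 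Since $\gamma\neq0$ in the rotation regime of interest, this forces $\Peb(t,\cdot)\in L^2(\R)$; combined with $\px\Peb=\ueb\in L^2(\R)$ it gives $\Peb(t,\cdot)\in H^1(\R)$, whence $\Peb(t,\pm\infty)=0$. This establishes \eqref{eq:P0}, verifies \eqref{eq:Pu}, and via Lemma \ref{lm:cns} yields \eqref{eq:con-u}.

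\emph{Conclusion and main difficulty.} With $\Peb(t,\infty)=0$ the identity \eqref{eq:121} collapses to \eqref{eq:120}; integrating \eqref{eq:120} over $[0,t]$ and invoking $\norm{\ueb(0,\cdot)}_{L^2(\R)}^2=\norm{u_{\eps,\beta,0}}_{L^2(\R)}^2\le C_0$ from \eqref{eq:u0eps} produces \eqref{eq:l2-u}. The genuinely delicate point is the membership $\Peb(t,\cdot)\in L^2(\R)$: it rests on the spatial decay of the regularized solution and its first three derivatives, which must be imported from the well-posedness theory for \eqref{eq:OHepsw}, and the nonlinear term $\ueb\px\ueb$ is the one where the $L^\infty$ bound is needed. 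Once this integrability is secured, the remaining steps are the same integrations by parts already performed in Lemma \ref{lm:cns}; the case $\gamma=0$ is simpler, since then the right-hand side of \eqref{eq:121} vanishes outright and \eqref{eq:120} is immediate.
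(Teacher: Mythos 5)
Your argument is correct and follows essentially the same route as the paper: both proofs extract the vanishing of $\Peb(t,\pm\infty)$ from the equation \eqref{eq:OHepsw} together with the regularity and decay of the viscous solution, and then feed this into the identity \eqref{eq:121} of Lemma \ref{lm:cns} before integrating in time with \eqref{eq:u0eps}. The only difference is cosmetic: the paper obtains \eqref{eq:con-u} by integrating the $x$-differentiated equation over $\R$ and gets \eqref{eq:P0} by letting $x\to\pm\infty$ pointwise in \eqref{eq:OHepsw}, whereas you derive \eqref{eq:P0} from $\gamma\Peb(t,\cdot)\in L^2(\R)$ plus $\Peb(t,\cdot)\in H^1(\R)\subset C_0(\R)$ and then deduce \eqref{eq:con-u} from it --- both versions rest on the same imported decay properties of $\ueb$, its derivatives, and $\pt\ueb$ at spatial infinity.
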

\begin{remark}\label{Remark-1}
In light of \eqref{eq:P0}, we have
\begin{equation}
\label{eq:new-def-P}
\Peb(t,x)=\int_{0}^{x} \ueb(t,y)dy=\int_{-\infty}^{x}\ueb(t,y)dy.
\end{equation}
\end{remark}
\begin{proof}[Proof of Lemma \ref{lm:l2-u}]
Differentiating \eqref{eq:OHepsw} with respect to $x$, we have
\begin{equation*}
\px(\pt\ueb + \ueb \px\ueb-\beta\pxxx\ueb - \eps \pxx \ueb)= \gamma\ueb.
\end{equation*}
Since $\ueb$ is a smooth solution of \eqref{eq:OHepsw}, an integration over $\R$ gives \eqref{eq:con-u}.
Again for the regularity of $\ueb$, from \eqref{eq:OHepsw}, we get
\begin{align*}
&\lim_{x\to -\infty}(\pt\ueb+ \ueb\px \ueb-\beta\pxxx\ueb -\eps\pxx\ueb)\\
&\qquad\qquad\qquad=\gamma\int_{0}^{-\infty}\ueb(t,x)dx=\gamma\Peb(t,-\infty)=0,\\
&\lim_{x\to \infty}(\pt\ueb+\ueb \px\ueb-\beta\pxxx\ueb -\eps\pxx\ueb)\\
&\qquad\qquad\qquad =\gamma\int_{0}^{\infty}\ueb(t,x)dx=\gamma\Peb(t,\infty)=0,
\end{align*}
which gives \eqref{eq:P0}.

Lemma \ref{lm:cns} says that also \eqref{eq:120} holds true.
Therefore, integrating \eqref{eq:120} on $(0,t)$, for \eqref{eq:u0eps},
we have \eqref{eq:l2-u}.
\end{proof}
Let us observe that, from the second equation of \eqref{eq:OHepsw}, we get
\begin{equation}
\label{eq:pt}
\pt\Peb(t,x)=\frac{d}{dt}\int_{0}^{x}\ueb(t,y)dy=\int_{0}^{x}\pt\ueb(t,y) dy.
\end{equation}
Therefore, integrating  on $(0,x)$ the first equation of \eqref{eq:OHepsw}, we obtain
\begin{equation}
\begin{split}
\label{eq:P1}
\pt\Peb &+\frac{1}{2}\ueb^2(t,x) -\eps\px\ueb(t,x)-\beta\pxx\ueb(t,x)\\
&- \frac{1}{2}\ueb^2(t,0))+\eps\px\ue(t,0)+\beta \pxx\ueb(t,0)=\gamma\int_{0}^{x}\Peb(t,y)dy.
\end{split}
\end{equation}
\begin{lemma}
\label{lm:p8}
For each $t\ge 0$, we have that
\begin{align}
\label{eq:intp-infty}
\int_{0}^{-\infty}\Peb(t,x)dx&=a_{\eps, \beta}(t), \\
\label{eq:int+infty}
\int_{0}^{\infty}\Peb(t,x)dx&=a_{\eps, \beta}(t),
\end{align}
where
\begin{equation}
a_{\eps,\,\beta}(t)=-\frac{1}{2\gamma}\ueb^2(t,0)) + \frac{\eps}{\gamma}\px\ueb(t,0)+\frac{\beta}{\gamma}\pxx\ueb(t,0).
\end{equation}
Moreover,
\begin{equation}
\label{eq:Pmedianulla}
\int_{\R}\Peb(t,x)dx=0, \quad t\geq 0.
\end{equation}
\end{lemma}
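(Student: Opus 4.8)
The plan is to obtain both identities \eqref{eq:intp-infty} and \eqref{eq:int+infty} by letting $x\to\mp\infty$ in the integrated equation \eqref{eq:P1}, and then to deduce \eqref{eq:Pmedianulla} by combining the two. The first observation is that the three boundary terms at $x=0$ appearing in \eqref{eq:P1} are precisely $\gamma\,a_{\eps,\beta}(t)$; hence \eqref{eq:P1} can be rewritten as
\begin{equation*}
\gamma\int_{0}^{x}\Peb(t,y)dy=\pt\Peb(t,x)+\frac{1}{2}\ueb^2(t,x)-\eps\px\ueb(t,x)-\beta\pxx\ueb(t,x)+\gamma\,a_{\eps,\beta}(t),
\end{equation*}
so that everything reduces to identifying the limits of the four $x$-dependent terms on the right-hand side.

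For the algebraic terms I would use that $\ueb(t,\cdot)$ is a smooth solution of \eqref{eq:OHepsw} decaying at spatial infinity together with its derivatives, the same regularity already exploited in the proofs of Lemmas \ref{lm:cns} and \ref{lm:l2-u} to discard boundary contributions; this gives
\begin{equation*}
\lim_{x\to\pm\infty}\ueb(t,x)=\lim_{x\to\pm\infty}\px\ueb(t,x)=\lim_{x\to\pm\infty}\pxx\ueb(t,x)=0.
\end{equation*}
For the remaining term $\pt\Peb(t,x)$ I would invoke \eqref{eq:pt}, which gives $\pt\Peb(t,x)=\int_{0}^{x}\pt\ueb(t,y)dy$; letting $x\to\pm\infty$ and interchanging the time derivative with the (convergent) spatial integral yields $\pt\Peb(t,\pm\infty)=\frac{d}{dt}\Peb(t,\pm\infty)=0$, since $\Peb(t,\pm\infty)=0$ by \eqref{eq:P0} of Lemma \ref{lm:l2-u}. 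Passing to the limits $x\to\infty$ and $x\to-\infty$ in the displayed form of \eqref{eq:P1} then leaves $\gamma\int_{0}^{\pm\infty}\Peb(t,y)dy=\gamma\,a_{\eps,\beta}(t)$, which is exactly \eqref{eq:intp-infty} and \eqref{eq:int+infty}.

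Finally, for \eqref{eq:Pmedianulla} I would split $\int_{\R}\Peb(t,x)dx=\int_{-\infty}^{0}\Peb(t,x)dx+\int_{0}^{\infty}\Peb(t,x)dx$ and note that $\int_{-\infty}^{0}\Peb\,dx=-\int_{0}^{-\infty}\Peb\,dx=-a_{\eps,\beta}(t)$ while $\int_{0}^{\infty}\Peb\,dx=a_{\eps,\beta}(t)$, so the two contributions cancel. I expect the main obstacle to be the passage to the limit in the $\pt\Peb$ term: unlike the purely algebraic terms, whose decay is immediate from the decay of $\ueb$ and its derivatives, controlling $\pt\Peb$ at infinity requires justifying the exchange of $\frac{d}{dt}$ with $\lim_{x\to\pm\infty}$, which I would secure from the smoothness of $\ueb$ and the integrability of $\pt\ueb(t,\cdot)$ (so that $\int_{0}^{x}\pt\ueb\,dy$ converges as $x\to\pm\infty$) together with the identity $\Peb(t,\pm\infty)=0$.
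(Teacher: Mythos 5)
Your proof is correct and follows essentially the same route as the paper: pass to the limits $x\to\pm\infty$ in the integrated identity \eqref{eq:P1}, use the decay of $\ueb$ and its derivatives (the paper cites $\ueb(t,\cdot)\in H^2(\R)$) for the algebraic terms and \eqref{eq:pt} together with \eqref{eq:P0} to kill $\pt\Peb$ at infinity, then combine the two one-sided integrals to get \eqref{eq:Pmedianulla}. The only presentational difference is that you collect the $x=0$ boundary terms into $\gamma\,a_{\eps,\beta}(t)$ at the outset, which the paper does implicitly.
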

\begin{proof}
We begin by observing that, for \eqref{eq:P0}, we have that
\begin{equation}
\label{eq:Pe10}
\lim_{x\to-\infty} \pt\Peb(t,x)=\int_{0}^{-\infty} \pt\ueb(t,x)dx=\frac{d}{dt}\int_{0}^{-\infty}\ueb(t,x)dx=0.
\end{equation}
Since
\begin{equation}
\label{eq:u-in-h-2}
\ueb(t,\cdot)\in H^2(\R), \quad t>0,
\end{equation}
we get
\begin{equation}
\label{eq:500}
\lim_{x\to-\infty}\left( \frac{1}{2}\ueb^2(t,x)-\eps\px\ueb(t,x)-\beta\pxx\ueb(t,x) \right)=0.
\end{equation}
Therefore, \eqref{eq:intp-infty} follows from \eqref{eq:P1}, \eqref{eq:Pe10} and \eqref{eq:500}.

Again by \eqref{eq:P0},
\begin{equation}
\label{eq:Pe11}
\lim_{x\to\infty} \pt\Peb(t,x)=\int_{0}^{\infty} \pt\ueb(t,x)dx=\frac{d}{dt}\int_{0}^{\infty}\ueb(t,x)dx=0.
\end{equation}
Moreover, again by \eqref{eq:u-in-h-2},
\begin{equation}
\label{eq:510}
\lim_{x\to\infty}\left( \frac{1}{2}\ueb^2(t,x)-\eps\px\ueb(t,x)- \beta\pxx\ueb(t,x)\right)=0.
\end{equation}
\eqref{eq:P1}, \eqref{eq:Pe11} and \eqref{eq:510} give \eqref{eq:int+infty}.

It follows from \eqref{eq:intp-infty} that
\begin{equation*}
\int_{-\infty}^{0}\Peb(t,x)dx = -a_{\eps, \beta}(t).
\end{equation*}
Therefore, for \eqref{eq:int+infty},
\begin{align*}
\int_{-\infty}^{0}\Peb(t,x)dx+\int_{0}^{\infty}\Peb(t,x)dx=\int_{\R} \Peb(t,x)dx =-a_{\eps,\beta}(t)+a_{\eps, \beta}(t)=0,
\end{align*}
that is \eqref{eq:Pmedianulla}.
\end{proof}
We observe that, for \eqref{eq:con-u}, \eqref{eq:intp-infty} and \eqref{eq:int+infty}, $\Peb(t,x)$  is integrable at $\pm\infty$.
Therefore, for each $t\ge 0$, we can consider the following function
\begin{equation}
\label{eq:F1}
\Feb(t,x)=\int_{-\infty}^{x}\Peb(t,y) dy.
\end{equation}
Thus, integrating the first equation of \eqref{eq:OHepsw} on $(-\infty,x)$, thanks \eqref{eq:F1}, we obtain
\begin{equation}
\label{eq:intP1}
\int_{-\infty}^{x}\pt\ueb dy+\frac{1}{2}\ueb^2 - \beta\pxx\ueb-\eps\px\ueb=\gamma\Feb.
\end{equation}
From the second equation of \eqref{eq:OHepsw} and Remark \ref{Remark-1}, we have
\begin{equation}
\label{eq:equaP}
\pt\Peb+ \frac{1}{2}\ueb^2-\beta\pxx\ueb -\eps\px\ueb=\gamma\Feb.
\end{equation}
\begin{lemma}
\label{lm:P-infty}
Let $T>0$. There exists a function $C(T)>0$, independent on $\eps$ and $\beta$, such that
\begin{align}
\label{eq:P-infty}
\norm{\Peb}_{L^{\infty}(I_T)}&\le C(T),\\
\label{eq:l2P}
\norm{\Peb(t,\cdot)}_{L^{2}(I_T)}&\le C(T),
\end{align}
where
\begin{equation}
\label{eq:defI}
I_{T}=(0,T)\times\R.
\end{equation}
\end{lemma}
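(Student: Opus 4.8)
\textbf{Proof strategy for Lemma \ref{lm:P-infty}.}

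The plan is to obtain both bounds from the ``potential'' equation \eqref{eq:equaP}, which expresses $\pt\Peb$ in terms of quantities I can already control via the $L^2$ estimate \eqref{eq:l2-u}. The natural starting point is to multiply \eqref{eq:equaP} by $\Peb$ and integrate over $\R$, producing $\tfrac12 \frac{d}{dt}\norm{\Peb(t,\cdot)}_{L^2(\R)}^2$ on the left. First I would treat each of the remaining terms on the right-hand side. The viscous and dispersive terms $-\eps\int_\R \px\ueb \,\Peb\,dx$ and $\beta\int_\R \pxx\ueb\,\Peb\,dx$ can be integrated by parts, moving the derivative onto $\Peb$ and using $\px\Peb=\ueb$ (Remark \ref{Remark-1}); this converts them into expressions like $\eps\int_\R \ueb^2\,dx$ and $-\beta\int_\R \px\ueb\,\ueb\,dx$, all controlled by \eqref{eq:l2-u} and the energy dissipation term there. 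The nonlinear term $\tfrac12\int_\R \ueb^2\,\Peb\,dx$ and the source term $\gamma\int_\R \Feb\,\Peb\,dx$ are the ones needing care: for the former I would use the $L^4$ control on $\ueb$ together with an $L^2$ bound on $\Peb$, and for the latter I would exploit the boundary behavior of $\Feb$ established implicitly through \eqref{eq:Pmedianulla} and \eqref{eq:F1}.

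Next, after assembling these estimates I expect to arrive at a differential inequality of Gronwall type,
\begin{equation*}
\frac{d}{dt}\norm{\Peb(t,\cdot)}_{L^2(\R)}^2 \le C\left(1+\norm{\Peb(t,\cdot)}_{L^2(\R)}^2\right),
\end{equation*}
whose integration over $(0,t)$, combined with the initial bound $\norm{P_{0,\eps,\beta}}_{L^2(\R)}^2\le \norm{P_0}_{L^2(\R)}^2$ from \eqref{eq:u0eps}, yields \eqref{eq:l2P} with a constant $C(T)$ depending only on $T$ and the data. The hypothesis \eqref{eq:beta-eps}, $\beta=\mathcal{O}(\eps^2)$, should be what keeps the dispersive contributions uniformly bounded as $\eps,\beta\to 0$, so I would keep careful track of the powers of $\eps$ and $\beta$ multiplying each term to ensure the constant is genuinely independent of the parameters.

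For the $L^\infty$ bound \eqref{eq:P-infty}, the cleanest route is to represent $\Peb$ pointwise. Since $\px\Peb=\ueb$ and $\Peb(t,-\infty)=0$ by \eqref{eq:P0}, one has $\Peb(t,x)=\int_{-\infty}^x \ueb(t,y)\,dy$, but this alone does not give an $L^\infty$ bound from an $L^2$ bound on $\ueb$. Instead I would use the interpolation-type estimate
\begin{equation*}
\Peb^2(t,x)=2\int_{-\infty}^x \Peb\,\px\Peb\,dy = 2\int_{-\infty}^x \Peb\,\ueb\,dy \le 2\norm{\Peb(t,\cdot)}_{L^2(\R)}\norm{\ueb(t,\cdot)}_{L^2(\R)},
\end{equation*}
so that $\norm{\Peb(t,\cdot)}_{L^\infty(\R)}^2$ is controlled by the product of the two $L^2$ norms, both already bounded by \eqref{eq:l2-u} and \eqref{eq:l2P}. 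Taking the supremum over $t\in(0,T)$ then gives \eqref{eq:P-infty}. I anticipate the main obstacle to be the first part: correctly handling the nonlinear and source terms in the energy identity so that the resulting Gronwall constant is uniform in both $\eps$ and $\beta$, which is precisely where the scaling assumption \eqref{eq:beta-eps} must be invoked.
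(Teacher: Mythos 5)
Your energy identity is set up exactly as in the paper: you multiply \eqref{eq:equaP} by $\Peb$, integrate by parts so that the viscous term becomes $-2\eps\int_\R\ueb^2\,dx$, the dispersive term vanishes identically (it equals $-\beta\int_\R\px(\ueb^2)\,dx=0$, so no relation between $\beta$ and $\eps$ is actually needed here --- note the lemma is stated without assuming \eqref{eq:beta-eps}), and the source term drops out because $\Feb(t,\infty)=\int_\R\Peb\,dx=0$ by \eqref{eq:Pmedianulla}. The interpolation $\Peb^2(t,x)\le 2\norm{\Peb(t,\cdot)}_{L^2(\R)}\norm{\px\Peb(t,\cdot)}_{L^2(\R)}$ is also the one the paper uses.

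The gap is in how you close the estimate. To reach the Gronwall inequality $\frac{d}{dt}\norm{\Peb(t,\cdot)}_{L^2(\R)}^2\le C(1+\norm{\Peb(t,\cdot)}_{L^2(\R)}^2)$ you propose to bound the nonlinear term $\int_\R\ueb^2\Peb\,dx$ using ``the $L^4$ control on $\ueb$.'' But no $L^4$ bound on $\ueb$ is available at this stage: it is established only later, in Lemma \ref{lm:bounded}, whose proof itself relies on \eqref{eq:P-infty} and \eqref{eq:l2P}, so invoking it here is circular. With only the $L^2$ bound \eqref{eq:l2-u} in hand, the natural estimate is $\int_\R\ueb^2|\Peb|\,dx\le\norm{\Peb(t,\cdot)}_{L^\infty(\R)}\norm{\ueb(t,\cdot)}^2_{L^2(\R)}$, which injects the unknown $L^\infty$ norm of $\Peb$ into the $L^2$ energy balance, and Gronwall does not close. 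The paper resolves this by coupling the two estimates rather than proving them sequentially: it keeps the inequality $\norm{\Peb(t,\cdot)}^2_{L^2(\R)}\le\norm{P_0}^2_{L^2(\R)}+\int_0^t\!\!\int_\R|\Peb|\,\ueb^2\,ds\,dx$, substitutes it into $\Peb^4(t,x)\le 4\norm{\Peb(t,\cdot)}^2_{L^2(\R)}\norm{\ueb(t,\cdot)}^2_{L^2(\R)}$, bounds the remaining double integral by $C_0^2T\norm{\Peb}_{L^\infty(I_T)}$, and arrives at the closed quartic inequality $\norm{\Peb}^4_{L^\infty(I_T)}-4C_0^4T\norm{\Peb}_{L^\infty(I_T)}-4C_0^2\norm{P_0}^2_{L^2(\R)}\le 0$, which forces $\norm{\Peb}_{L^\infty(I_T)}\le C(T)$; the $L^2$ bound \eqref{eq:l2P} then follows a posteriori. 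You need this bootstrap (or an equivalent one) in place of the Gronwall step.
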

\begin{proof}
Let $0\le t \le T$. We begin by proving that
\begin{equation}
\begin{split}
\label{eq:1200}
\norm{\Peb(t,\cdot)}^2_{L^2(\R)}=\norm{P_{\eps, \beta, 0}}^2_{L^2(\R)} &-2\eps\int_{0}^{t}\norm{\ueb(s,\cdot)}^2_{L^2(\R)}ds\\
&-\int_{0}^{t}\!\!\!\int_{\R}\Peb \ueb^2 dsdx.
\end{split}
\end{equation}
Multiplying  \eqref{eq:equaP} by $\Pe$, an integration on $\R$ gives
\begin{align*}
\frac{d}{dt}\int_{\R} \Peb^2dx=&\int_{\R} \Peb\pt\Peb dx\\
=&2\eps\int_{\R}\px\ueb\Peb dx  - \int_{\R}\Peb\ueb^2 dx \\
&+2\beta\int_{\R}\pxx\ueb\Peb dx +2\gamma\int_{\R}\Peb\Feb dx.
\end{align*}
For \eqref{eq:OHepsw},
\begin{equation}
\label{eq:2005}
\begin{split}
2\eps\int_{\R}\px\ueb\Peb dx&= -2\eps\int_{\R}\ueb\px\Peb dx=-2\eps\int_{\R}\ueb^2dx,\\
2\beta\int_{\R}\pxx\ueb\Peb dx &= -2\beta\int_{\R}\px\ueb\ueb dx=0,
\end{split}
\end{equation}
while, for \eqref{eq:F1},
\begin{equation*}
2\int_{\R}\Peb\Feb dx=2\int_{\R}\Feb\px\Feb dx= \Feb^2(t,\infty)-\Feb^2(t,-\infty)=\Feb^2(t,\infty).
\end{equation*}
Then,
\begin{equation}
\label{eq:300}
\frac{d}{dt}\int_{\R} \Peb^2dx=-2\eps\int_{\R}\ueb^2dx -\int_{\R} \Peb \ue^2 dx +\gamma\Feb ^2(t,\infty).
\end{equation}
It follows from \eqref{eq:Pmedianulla} that
\begin{equation}
\label{eq:301}
\lim_{x\to\infty}\Feb ^2(t,x)= \left(\int_{\R}\Peb(t,x)dx\right)^2=0.
\end{equation}
\eqref{eq:300}, \eqref{eq:301} and an integration on $(0,t)$ give \eqref{eq:1200}.

Let us show that
\begin{equation}
\label{eq:l2-P}
\norm{\Peb(t,\cdot)}^2_{L^2(\R)}\le\norm{P_{0}}^2_{L^2(\R)}+\int_{0}^{t}\!\!\!\int_{\R}\vert\Peb\vert \ueb^2 dsdx.
\end{equation}
Thanks to \eqref{eq:u0eps} and \eqref{eq:1200}, we have
\begin{align*}
\norm{\Peb(t,\cdot)}^2_{L^2(\R)}&\leq\norm{P_{0}}^2_{L^2(\R)}-\int_{0}^{t}\!\!\!\int_{\R}\Peb \ueb^2 dsdx\\
&\leq\norm{P_{0}}^2_{L^2(\R)}+\left\vert\int_{0}^{t}\!\!\!\int_{\R}\Peb \ueb^2 dsdx\right\vert\\
&\leq \norm{P_{0}}^2_{L^2(\R)} + \int_{0}^{t}\!\!\!\int_{\R}\vert\Peb\vert \ueb^2 dsdx,
\end{align*}
that is \eqref{eq:l2-P}.

We prove \eqref{eq:P-infty}. 
Due to the H\"older inequality, we get
\begin{equation*}
\Peb^2(t,x)\leq 2\int_{\R}\vert\Peb\px\Peb\vert dx\leq 2\norm{\Peb(t,\cdot)}_{L^2(\R)}\norm{\px\Peb(t,\cdot)}_{L^2(\R)},
\end{equation*}
that is
\begin{equation*}
\Peb^4(t,x)\leq 4\norm{\Peb(t,\cdot)}^2_{L^2(\R)}\norm{\px\Peb(t,\cdot)}^2_{L^2(\R)} .
\end{equation*}
For \eqref{eq:OHepsw}, \eqref{eq:l2-u} and \eqref{eq:l2-P},
\begin{align*}
\Peb^4(t,x)\leq& 4\norm{\Peb(t,\cdot)}^2_{L^2(\R)}C_{0}^2\\
\leq&4\left(\norm{P_{0}}^2_{L^2(\R)}+\int_{0}^{t}\!\!\!\int_{\R}\vert\Peb\vert\ueb^2dsdx\right)C_{0}^2\\
\leq &4 \norm{P_{0}}^2_{L^2(\R)}C_0^2 +4C_0\left(\int_{0}^{t}\!\!\!\int_{\R}\vert\Peb\vert\ueb^2dsdx\right)\\
\leq& 4 \norm{P_{0}}^2_{L^2(\R)}C_0^2 +4 C_0^4T\norm{\Peb}_{L^{\infty}(I_T)}.
\end{align*}
Therefore,
\begin{equation}
\label{eq:L-4}
\norm{\Peb}^4_{L^{\infty}(I_T)}-4C_0^4 T\norm{\Peb}_{L^{\infty}(I_T)} - 4 C_0^2 \norm{P_{0}}^2_{L^2(\R)}\leq 0.
\end{equation}
Let us consider the following function
\begin{equation}
\label{eq:g-func}
g(X)=X^4 - A(T)X -B,
\end{equation}
where
\begin{equation}
\label{eq:cost1}
A(T)=4C_0^4 T>0, \quad B=4 C_0^2 \norm{P_{0}}^2_{L^2(\R)}>0.
\end{equation}
We observe that
\begin{equation}
\label{eq:teozeri}
\lim_{X\to -\infty} g(X)=\infty, \quad g(0)=-B<0.
\end{equation}
Since $g'(X)=4 X^3 -A(T) $, we have that
\begin{align*}
g\quad \textrm{is increasing in}\quad [E(T), \infty),
\end{align*}
where $\displaystyle E(T)=\Big(\frac{A(T)}{4}\Big)^{\frac{1}{3}}>0$.\\
Thus,
\begin{equation}
\label{eq:600}
g(E(T))<g(0)<0.
\end{equation}
Moreover,
\begin{equation}
\label{eq:7000}
\lim_{X\to \infty} g(X)=\infty.
\end{equation}
Then, it follows from \eqref{eq:teozeri}, \eqref{eq:600} and \eqref{eq:7000} that the function $g$ has only two zeros $D(T)<0<C(T)$.
Therefore, the inequality
\begin{equation*}
X^4 - A(T)X -B\leq 0
\end{equation*}
is verified when
\begin{equation}
\label{eq:601}
D(T)\leq X\leq C(T).
\end{equation}
Taking $X=\norm{\Peb}_{L^{\infty}(I_T)}$, we have \eqref{eq:P-infty}.

Finally, \eqref{eq:l2P} follows from \eqref{eq:P-infty} and \eqref{eq:l2-P}.
\end{proof}

\begin{lemma}\label{lm::l-infty-u}
Fixed $T>0$. Then,
\begin{equation}
\label{eq:l-infty-u}
\norm{\ueb}_{L^{\infty}(I_T)}\le C(T)\beta^{-\frac{1}{3}},
\end{equation}
where $I_T$ is defined in \eqref{eq:defI}. Moreover, for every $0\le t\le T$,
\begin{equation}
\label{eq:ux2}
\beta\norm{\px\ueb(t,\cdot)}^2_{L^2(\R)}+ 2\beta\eps\int_{0}^{t} \norm{\pxx\ueb(s,\cdot)}^2_{L^2(\R)} ds \le C(T)\beta^{-\frac{1}{3}}.
\end{equation}
\end{lemma}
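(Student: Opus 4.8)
The plan is to combine two energy identities so as to exploit the Hamiltonian structure of the dispersive part, and then to close the resulting inequality using the space--time dissipation already furnished by \eqref{eq:l2-u} together with the one--dimensional Agmon inequality $\norm{f}_{L^\infty(\R)}^2\le 2\norm{f}_{L^2(\R)}\norm{f'}_{L^2(\R)}$. First I would derive two identities. Multiplying the first equation of \eqref{eq:OHepsw} by $\pxx\ueb$ and integrating over $\R$, the dispersive contribution $-\beta\int_\R\pxxx\ueb\,\pxx\ueb\,dx$ vanishes and the source term $\gamma\int_\R\Peb\,\pxx\ueb\,dx=-\gamma\int_\R\ueb\px\ueb\,dx=0$, leaving
\[
\frac{d}{dt}\int_\R(\px\ueb)^2\,dx+2\eps\int_\R(\pxx\ueb)^2\,dx=-\int_\R(\px\ueb)^3\,dx .
\]
Multiplying instead by $\ueb^2$ and integrating, the convective term drops out while two integrations by parts in the dispersive term reproduce the same cubic, giving
\[
\frac{d}{dt}\int_\R\ueb^3\,dx=3\beta\int_\R(\px\ueb)^3\,dx+3\gamma\int_\R\Peb\ueb^2\,dx-6\eps\int_\R\ueb(\px\ueb)^2\,dx .
\]

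Taking $\beta$ times the first identity plus one third of the second, the indefinite cubic $\int_\R(\px\ueb)^3\,dx$ cancels exactly (this is the Hamiltonian cancellation), and with $E(t):=\beta\norm{\px\ueb(t,\cdot)}_{L^2(\R)}^2+\tfrac13\int_\R\ueb^3\,dx$ I obtain
\[
\frac{dE}{dt}+2\beta\eps\int_\R(\pxx\ueb)^2\,dx=\gamma\int_\R\Peb\ueb^2\,dx-2\eps\int_\R\ueb(\px\ueb)^2\,dx .
\]
I would then integrate this in time. The forcing term is controlled by $\norm{\Peb}_{L^\infty(I_T)}\le C(T)$ from \eqref{eq:P-infty} and $\norm{\ueb(t,\cdot)}_{L^2(\R)}^2\le C_0$ from \eqref{eq:l2-u}, so $\bigl|\gamma\int_0^t\!\int_\R\Peb\ueb^2\bigr|\le C(T)$. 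The delicate term is $\eps\int_0^t\!\int_\R\ueb(\px\ueb)^2$: the crucial point is \emph{not} to absorb it into the diffusion $\int(\pxx\ueb)^2$ (which forces a super--linear Gronwall inequality and a far worse power of $\beta$), but to pull out $M(t):=\sup_{[0,t]}\norm{\ueb(s,\cdot)}_{L^\infty(\R)}$ and use the dissipation bound $2\eps\int_0^t\norm{\px\ueb}_{L^2(\R)}^2\le C_0$ already contained in \eqref{eq:l2-u}, giving $\bigl|2\eps\int_0^t\!\int_\R\ueb(\px\ueb)^2\bigr|\le C_0 M(t)$.

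Writing $Y(t):=\beta\norm{\px\ueb(t,\cdot)}_{L^2(\R)}^2\ge 0$ and bounding $\bigl|\tfrac13\int_\R\ueb^3\bigr|\le\tfrac{C_0}{3}M(t)$, the integrated identity yields $Y(t)\le E(0)+C(T)+\tfrac{4}{3}C_0 M(t)$. Here Agmon's inequality gives $\norm{\ueb(s,\cdot)}_{L^\infty}^2\le 2C_0^{1/2}(Y(s)/\beta)^{1/2}$, so with $\bar Y(t):=\sup_{[0,t]}Y$ one has $M(t)\le\sqrt2\,C_0^{1/4}\beta^{-1/4}\bar Y(t)^{1/4}$; taking the supremum over $[0,t]$ produces the self--contained inequality $\bar Y\le A+B\beta^{-1/4}\bar Y^{1/4}$, where $A=E(0)+C(T)$, and where \eqref{eq:u0eps} together with Agmon applied to the initial datum gives $E(0)\le C_0+CC_0^{3/2}\beta^{-1/4}\le C\beta^{-1/3}$ for $\beta\le 1$. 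A single use of Young's inequality, $B\beta^{-1/4}\bar Y^{1/4}\le\tfrac14\bar Y+\tfrac34 B^{4/3}\beta^{-1/3}$, absorbs the nonlinear term and gives $\bar Y\le C(T)\beta^{-1/3}$; it is precisely the conjugate exponent $4/3$ acting on the factor $\beta^{-1/4}$ (from Agmon combined with $\norm{\px\ueb}^2=Y/\beta$) that manufactures the asserted rate. This is the first summand of \eqref{eq:ux2}, and $\norm{\ueb}_{L^\infty(I_T)}=M(T)\le C(T)\beta^{-1/3}$ is \eqref{eq:l-infty-u}. Re--inserting both bounds into the integrated energy identity then controls the remaining dissipation term $2\beta\eps\int_0^t\norm{\pxx\ueb}_{L^2(\R)}^2\le C(T)\beta^{-1/3}$, completing \eqref{eq:ux2}. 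The main obstacle is the cubic $\int_\R(\px\ueb)^3$: handling it directly blows the estimate up, and the two devices that rescue the argument are the Hamiltonian cancellation and the use of the time--integrated $L^2$ dissipation in place of Young's inequality on $\int_\R\ueb(\px\ueb)^2$.
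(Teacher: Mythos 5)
Your proof is correct and follows essentially the same route as the paper: your two identities combined with weights $\beta$ and $\tfrac13$ amount exactly to the paper's single multiplication of \eqref{eq:OHepsw} by $-2\beta\pxx\ueb+\ueb^2$, and both arguments then control $2\eps\int_0^t\!\int_\R\ueb(\px\ueb)^2$ via the dissipation in \eqref{eq:l2-u} and close with the Agmon inequality $\ueb^2\le 2\norm{\ueb}_{L^2}\norm{\px\ueb}_{L^2}$. The only (cosmetic) differences are that the paper keeps the final bootstrap as a quartic inequality $\norm{\ueb}^4_{L^{\infty}(I_T)}\le \tfrac{C(T)}{\beta}\bigl(1+\norm{\ueb}_{L^{\infty}(I_T)}\bigr)$ solved directly, whereas you phrase it in terms of $\bar Y$ and absorb the sublinear term by Young's inequality.
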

\begin{proof}
Let $0\le t\le T$. Multiplying  \eqref{eq:OHepsw} by $-2\beta\pxx\ueb + \ueb^2$, we have
\begin{equation}
\label{eq:Ohmp}
\begin{split}
&(-2\beta\pxx\ueb +\ueb^2)\pt\ueb + \frac{1}{2}(-2\beta\pxx\ueb +\ueb^2)\px\ueb^2\\
&\qquad-\beta(-2\beta\pxx\ueb +\ueb^2)\pxxx\ueb -\eps(-2\beta\pxx\ueb +\ueb^2)\pxx\ueb\\
&\quad =\gamma(-2\beta\pxx\ueb + \ueb^2)\Peb.
\end{split}
\end{equation}
Since
\begin{align*}
\int_{\R}(-2\beta\pxx\ueb +\ueb^2)\pt\ueb dx &= \frac{d}{dt}\left(\beta\norm{\px\ueb(t,\cdot)}^2_{L^2(\R)}+\frac{1}{3}\int_{\R}\ueb^3dx\right),\\
\int_{\R}(-2\beta\pxx\ueb +\ueb^2)\ueb\px\ueb dx &= -2\beta\int_{\R}\ueb\px\ueb\pxx\ueb dx,\\
-\beta\int_{\R}(-2\beta\pxx\ueb +\ueb^2)\pxxx\ueb dx &= 2\beta\int_{\R}\ueb\px\ueb\pxx\ueb dx,\\
-\eps\int_{\R}(-2\beta\pxx\ueb +\ueb^2)\pxx\ueb dx &= 2\beta\eps\norm{\pxx\ueb(t,\cdot)}^2_{L^2(\R)}+ 2\eps\int_{\R} \ueb(\px\ueb)^2dx,
\end{align*}
integrating \eqref{eq:Ohmp} over $\R$,
\begin{align*}
&\frac{d}{dt}\left(\beta\norm{\px\ueb(t,\cdot)}^2_{L^2(\R)}+\frac{1}{3}\int_{\R}\ueb^3dx\right) \\
& \qquad+ 2\eps\int_{\R} \ueb(\px\ueb)^2dx+ 2\beta\eps\norm{\pxx\ueb(t,\cdot)}^2_{L^2(\R)}\\
&\quad = -2\gamma\beta\int_{\R}\pxx\ueb\Peb dx  + \gamma\int_{\R}\ueb^2\Peb dx.
\end{align*}
Due to \eqref{eq:OHepsw},
\begin{equation*}
-2\gamma\beta\int_{\R}\pxx\ueb\Peb dx= 2\gamma\beta\int_{\R}\px\ueb\px\Peb dx= 2\gamma\beta\int_{\R}\ueb\px\ueb dx=0.
\end{equation*}
Therefore, thanks to  \eqref{eq:l2-u}, \eqref{eq:l2P} and the H\"older inequality, we get
\begin{align*}
&\frac{d}{dt}\left(\beta\norm{\px\ueb(t,\cdot)}^2_{L^2(\R)}+\frac{1}{3}\int_{\R}\ueb^3dx\right) + 2\beta\eps\norm{\pxx\ueb(t,\cdot)}^2_{L^2(\R)}\\
& \qquad = \gamma\int_{\R}\ueb^2\Peb dx - 2\eps\int_{\R} \ueb(\px\ueb)^2 dx\\
&\qquad \le \gamma\norm{\ueb}_{L^{\infty}(I_T)}\int_{\R}\vert\ueb\vert\vert\Peb\vert dx + 2\eps\norm{\ueb}_{L^{\infty}(I_T)}\int_{\R} (\px\ueb)^2dx\\
&\qquad \le \gamma C_0 C(T)\norm{\ueb}_{L^{\infty}(I_T)}+ 2\eps\norm{\ueb}_{L^{\infty}(I_T)}\int_{\R}(\px\ueb)^2dx.
\end{align*}
An integration over $(0,t)$, \eqref{eq:u0eps} and \eqref{eq:l2-u} give
\begin{align*}
&\beta\norm{\px\ueb(t,\cdot)}^2_{L^2(\R)}+ 2\beta\eps\int_{0}^{t}\norm{\pxx\ueb(s,\cdot)}^2_{L^2(\R)}ds \\
&\qquad \le C_{0} + \gamma C_0 C(T)\norm{\ueb}_{L^{\infty}(I_T)}t\\
&\qquad\quad +2\eps\norm{\ueb}_{L^{\infty}(I_T)}\int_{0}^{t}\!\!\!\int_{\R}(\px\ueb)^2dx - \frac{1}{3}\int_{\R}\ueb^3 dsdx\\
&\qquad \le C_{0} + \gamma C_0 C(T)T\norm{\ueb}_{L^{\infty}(I_T)} + C_0 \norm{\ueb}_{L^{\infty}(I_T)}+\frac{1}{3}\left\vert\int_{\R}\ueb^3dx\right\vert\\
&\qquad \le C_{0} + \gamma C_0 C(T)T\norm{\ueb}_{L^{\infty}(I_T)} +C_0\norm{\ueb}_{L^{\infty}(I_T)}+\frac{1}{3}\norm{\ueb}_{L^{\infty}(I_T)}\int_{\R}(\ueb)^2 dx \\
&\qquad \le C_{0} + \gamma C_0 C(T)T\norm{\ueb}_{L^{\infty}(I_T)} +C_0\norm{\ueb}_{L^{\infty}(I_T)}+\frac{1}{3}C_0 \norm{\ueb}_{L^{\infty}(I_T)}\\
&\qquad =  C_{0} + C_0 \left(\gamma C(T)T + \frac{4}{3}\right)\norm{\ueb}_{L^{\infty}(I_T)}.
\end{align*}
Hence,
\begin{equation}
\label{eq:ux1}
\begin{split}
\beta\norm{\px\ueb(t,\cdot)}^2_{L^2(\R)}&+ 2\beta\eps\int_{0}^{t}\norm{\pxx\ueb(s,\cdot)}^2_{L^2(\R)}ds\\
\le & C(T)\left(1+\norm{\ueb}_{L^{\infty}(I_T)}\right).
\end{split}
\end{equation}
Following \cite{CK, LN}, we begin by observing that, due to \eqref{eq:l2-u}, \eqref{eq:ux1} and the H\"older inequality,
\begin{align*}
\ueb^2(t,x)&=2\int_{-\infty}^{x}\ueb\px\ueb dy\le 2\int_{\R}\vert\ueb\px\ueb\vert dx\\
&\le\frac{2}{\sqrt{\beta}}\norm{\ueb}_{L^2(\R)}\sqrt{\beta}\norm{\px\ueb(t,\cdot)}_{L^2(\R)}\\
&\le\frac{2}{\sqrt{\beta}}C_0\sqrt{ C(T)\left(1+\norm{\ueb}_{L^{\infty}(I_T)}\right)},
\end{align*}
that is
\begin{equation}
\label{eq:quarto-grado}
\norm{\ueb}^4_{L^{\infty}(I_T)} \le \frac{C(T)}{\beta}\left( 1+\norm{\ueb}_{L^{\infty}(I_T)}\right).
\end{equation}
Let us show that \eqref{eq:quarto-grado} is verified when
\begin{equation}
\label{eq:sol-eq-quarto}
\norm{\ueb}_{L^{\infty}(I_T)}\le \max\left\{1, \left(\frac{2C(T)}{\beta}\right)^{\frac{1}{3}}\right\}\le C(T)\beta^{-\frac{1}{3}}.
\end{equation}
Let us  consider $y= \norm{\ueb}_{L^{\infty}(I_T)}$. Therefore, it follows from \eqref{eq:quarto-grado} that
\begin{equation}
\label{eq:equat-in-y}
y^4 \le  \frac{C(T)}{\beta}\left( 1+y \right).
\end{equation}
If $y\le 1$, we have \eqref{eq:sol-eq-quarto}.\\
Let $y>1$ and  we suppose that
\begin{equation*}
y>\left(\frac{2C(T)}{\beta}\right)^{\frac{1}{3}},
\end{equation*}
that is
\begin{equation}
\label{eq:potenza}
y^3>\frac{2C(T)}{\beta},  \quad y^4>\frac{2C(T)}{\beta}y.
\end{equation}
It follows from \eqref{eq:potenza} that
\begin{equation*}
2y^4>y^4+y^3>\frac{2C(T)}{\beta}(y+1).
\end{equation*}
Therefore,
\begin{equation*}
y^4>\frac{C(T)}{\beta}(y+1),
\end{equation*}
which is in contradiction with \eqref{eq:equat-in-y}.

Finally, \eqref{eq:ux2} follows from \eqref{eq:l-infty-u} and \eqref{eq:ux1}.
\end{proof}

\begin{lemma}
\label{lm:bounded}
Let  $T>0$. Assume \eqref{eq:beta-eps} holds true. Then:
\begin{itemize}
\item[$i$)] the family $\{\ueb\}_{\eps,\beta}$ is bounded in $L^{4}(I_T)$;
\item[$ii$)] the following families $\{\beta\pxx\ueb\}_{\eps,\beta},\,\{\sqrt{\eps}\ueb\px\ueb\}_{\eps,\beta}, \, \{\beta\sqrt{\eps}\pxxx\ueb \}_{\eps,\beta} $ are bounded in $L^2(I_T)$,
 \end{itemize}
where $I_T$ is defined in \eqref{eq:defI}.
\end{lemma}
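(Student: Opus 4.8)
The plan is to run a higher–order energy estimate by testing the convective–dispersive equation against $\ueb^3$, to read the bounds in (ii) off the resulting dissipation, and to obtain (i) by a Gronwall argument. First I would multiply the first equation in \eqref{eq:OHepsw} by $\ueb^3$ and integrate over $\R$. The convective term is a perfect derivative and drops, the viscous term produces the favourable dissipation $3\eps\int_{\R}\ueb^2(\px\ueb)^2\,dx=3\norm{\sqrt\eps\ueb\px\ueb}_{L^2(\R)}^2$, the source gives $\gamma\int_{\R}\ueb^3\Peb\,dx$, and after one integration by parts the dispersive term contributes $-\beta\int_{\R}\ueb^3\pxxx\ueb\,dx=3\beta\int_{\R}\ueb(\px\ueb)^3\,dx$, so that
$$\frac{1}{4}\frac{d}{dt}\int_{\R}\ueb^4\,dx+3\eps\int_{\R}\ueb^2(\px\ueb)^2\,dx=3\beta\int_{\R}\ueb(\px\ueb)^3\,dx+\gamma\int_{\R}\ueb^3\Peb\,dx.$$
Using \eqref{eq:l2-u} and \eqref{eq:P-infty} together with $\int_{\R}|\ueb|^3\,dx\le\norm{\ueb}_{L^2(\R)}\norm{\ueb}_{L^4(\R)}^2$, the source is bounded by $C(T)\norm{\ueb}_{L^4(\R)}^2$, which is harmless for a Gronwall inequality on $\int_{\R}\ueb^4\,dx$.

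The real difficulty is the cubic dispersive term $3\beta\int_{\R}\ueb(\px\ueb)^3\,dx$: every term–by–term bound of it (for instance by H\"older after integrating by parts) reintroduces $\norm{\px\ueb}_{L^2}$ and $\norm{\pxx\ueb}_{L^2}$, which by \eqref{eq:ux2} are only controlled by negative powers of $\beta$, and the estimate then blows up. To neutralise it I would augment the functional by the dispersive correctors dictated by the quartic invariant of the underlying KdV dynamics $\pt u+u\px u-\beta\pxxx u=0$, namely
$$E(t)=\int_{\R}\Big(\ueb^4+12\beta\,\ueb(\px\ueb)^2+\frac{36}{5}\beta^2(\pxx\ueb)^2\Big)dx.$$
The coefficients $12$ and $\tfrac{36}{5}$ are chosen precisely so that, when $\frac{d}{dt}E$ is computed for the full equation, all purely dispersive cubic contributions ($\int_{\R}\ueb(\px\ueb)^3\,dx$ and $\int_{\R}\px\ueb(\pxx\ueb)^2\,dx$) cancel identically, leaving only viscous and source terms:
$$\frac{d}{dt}E+12\eps\int_{\R}\ueb^2(\px\ueb)^2\,dx+\frac{72}{5}\beta^2\eps\int_{\R}(\pxxx\ueb)^2\,dx=-24\beta\eps\int_{\R}\ueb(\pxx\ueb)^2\,dx+4\gamma\int_{\R}\ueb^3\Peb\,dx+12\beta\gamma\int_{\R}(\px\ueb)^2\Peb\,dx.$$
The two dissipative integrals on the left are exactly $12\norm{\sqrt\eps\ueb\px\ueb}_{L^2(\R)}^2$ and $\frac{72}{5}\norm{\beta\sqrt\eps\pxxx\ueb}_{L^2(\R)}^2$, so once $E$ is controlled their time integrals produce the bounds for $\sqrt\eps\ueb\px\ueb$ and $\beta\sqrt\eps\pxxx\ueb$ in (ii).

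It then remains to absorb the right–hand side. Writing $\int_{\R}\ueb(\pxx\ueb)^2\,dx=-\int_{\R}\ueb\px\ueb\pxxx\ueb\,dx$ and applying Young's inequality, the cross term $-24\beta\eps\int_{\R}\ueb(\pxx\ueb)^2\,dx$ splits between the two dissipative integrals; since the coefficients $12$ and $\tfrac{72}{5}$ leave room, it is absorbed into the left–hand side with a definite positive residual. For the sources, \eqref{eq:P-infty} and the H\"older bound above give $\big|4\gamma\int_{\R}\ueb^3\Peb\,dx\big|\le C(T)\norm{\ueb}_{L^4(\R)}^2$, while for $12\beta\gamma\int_{\R}(\px\ueb)^2\Peb\,dx$ I would use \eqref{eq:P-infty} and, after integrating in time, the bound $\beta\int_0^T\norm{\px\ueb(s,\cdot)}_{L^2(\R)}^2\,ds\le\frac{\beta}{2\eps}C_0$ coming from \eqref{eq:l2-u}; here the balance $\beta=\mathcal{O}(\eps^2)$ of \eqref{eq:beta-eps} is decisive, since it turns this quantity into $\mathcal{O}(\eps)$, hence into a uniformly bounded contribution. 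Together with the bound on $E(0)$ furnished by \eqref{eq:u0eps} (which controls $\beta\int_{\R}\ueb(\px\ueb)^2\,dx$ and $\beta^2\norm{\pxx\ueb}_{L^2}^2$ at $t=0$), Gronwall's lemma yields $\sup_{[0,T]}E(t)\le C(T)$ and the dissipation bounds, giving (i) and the first two families in (ii). Finally, the bound on $\beta\pxx\ueb$ is immediate: by \eqref{eq:ux2}, $\norm{\beta\pxx\ueb}_{L^2(I_T)}^2=\beta^2\int_0^T\norm{\pxx\ueb(s,\cdot)}_{L^2(\R)}^2\,ds\le\frac{C(T)}{2}\beta^{2/3}\eps^{-1}$, which under \eqref{eq:beta-eps} is $\mathcal{O}(\eps^{1/3})$.

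The main obstacle is precisely the cubic dispersive term $\beta\int_{\R}\ueb(\px\ueb)^3\,dx$: it cannot be dominated term by term with the available a priori estimates, and the whole argument hinges on designing the corrected functional $E$ so that this term cancels, on verifying that the correctors do not destroy the coercivity $E\gtrsim\int_{\R}\ueb^4\,dx$ needed to extract (i), and on using the sharp scaling $\beta=\mathcal{O}(\eps^2)$ to dominate both the residual viscous cross term and the dispersive source.
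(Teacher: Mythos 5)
Your proposal follows essentially the same route as the paper: your functional $E(t)$ is exactly four times the paper's $G(t)=\frac{1}{4}\int_{\R}\ueb^4\,dx+3\beta\int_{\R}\ueb(\px\ueb)^2\,dx+\frac{9}{5}\beta^2\int_{\R}(\pxx\ueb)^2\,dx$, obtained there by multiplying the equation by $\ueb^3-3\beta(\px\ueb)^2-6\beta\ueb\pxx\ueb+\frac{18}{5}\beta^2\pxxxx\ueb$, with the same cancellation of the cubic dispersive terms, the same Young/Gronwall absorption, and the same use of $\beta=\mathcal{O}(\eps^2)$ and of \eqref{eq:l-infty-u} to control the sign-indefinite corrector $3\beta\int_{\R}\ueb(\px\ueb)^2\,dx$. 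The only slip is that your displayed identity for $\frac{d}{dt}E$ omits the source contribution $-24\gamma\beta\int_{\R}\ueb\pxx\ueb\Peb\,dx$ coming from the corrector $12\beta\ueb(\px\ueb)^2$, but it is absorbed by exactly the Young-inequality step you already use (the paper bounds it by $\frac{9}{5}\beta^2\int_{\R}(\pxx\ueb)^2\,dx+\frac{5\gamma^2}{4}\int_{\R}\ueb^2\Peb^2\,dx$), so the argument is unaffected.
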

\begin{proof}
Let $0\le t \le T$. Multiplying  \eqref{eq:OHepsw} by
\begin{equation*}
\ueb^3 - 3\beta(\px\ueb)^2 - 6\beta\ueb\pxx\ueb +\frac{18}{5}\beta^2\pxxxx\ueb ,
\end{equation*}
we have
\begin{equation}
\label{eq:Ohmp-1}
\begin{split}
\Big(\ueb^3 &- 3\beta(\px\ueb)^2 - 6\beta\ueb\pxx\ueb +\frac{18}{5}\beta^2\pxxxx\ueb\Big)\pt\ueb\\
&+ \Big(\ueb^3 - 3\beta(\px\ueb)^2 - 6\beta\ueb\pxx\ueb +\frac{18}{5}\beta^2\pxxxx\ueb\Big)\ueb\px\ueb\\
&-\beta\Big(\ueb^3 - 3\beta(\px\ueb)^2 - 6\beta\ueb\pxx\ueb +\frac{18}{5}\beta^2\pxxxx\ueb\Big)\pxxx\ueb\\
&-\eps\Big(\ueb^3 - 3\beta(\px\ueb)^2 - 6\beta\ueb\pxx\ueb +\frac{18}{5}\beta^2\pxxxx\ueb\Big)\pxx\ueb\\
=&\gamma\Big(\ueb^3 - 3\beta(\px\ueb)^2 - 6\beta\ueb\pxx\ueb +\frac{18}{5}\beta^2\pxxxx\ueb\Big)\Peb.
\end{split}
\end{equation}
We observe that
\begin{align*}
&-3\beta\int_{\R}(\px\ueb)^2\pt\ueb dx -6\beta\int_{\R} \ueb\pxx\ueb \pt\ueb dx\\
&\qquad =-3\beta\int_{\R}(\px\ueb)^2\pt\ueb dx +6\beta\int_{\R}\px\ueb\px(\ueb\pt\ueb) dx\\
&\qquad =3\beta\int_{\R}(\px\ueb)^2\pt\ueb dx + 6\beta \int_{\R}\ueb\px\ueb\ptx\ueb dx\\
&\qquad = 3\beta\int_{\R} (\px\ueb)^2\pt\ueb dx + 3\beta\int_{\R}\ueb\pt(\px\ueb)^2dx,
\end{align*}
that is
\begin{equation}
\label{eq:u-ux}
\begin{split}
-3\beta\int_{\R}(\px\ueb)^2\pt\ueb dx &-6\beta\int_{\R} \ueb\pxx\ueb \pt\ueb dx\\
=&3\beta\frac{d}{dt}\int_{\R}\ueb(\px\ueb)^2 dx.
\end{split}
\end{equation}
We have that
\begin{align*}
&-3\beta\int_{\R}(\px\ueb)^3\ueb dx\\
&\qquad  =3\beta\int_{\R}\ueb\px(\ueb(\px\ueb)^2)dx\\
&\qquad =3\beta\int_{\R} (\px\ueb)^3\ueb dx + 6\beta\int_{\R}\ueb^2 \px\ueb\pxx\ueb dx,
\end{align*}
namely
\begin{equation}
\label{eq:u-ux-3}
-\beta\int_{\R}(\px\ueb)^3\ueb dx= \beta\int_{\R}\ueb^2 \px\ueb\pxx\ueb dx.
\end{equation}
We observe that
\begin{equation}
\label{eq:ux-uxx}
3\beta^2\int_{\R}(\px\ueb)^2 \pxxx\ueb dx = - 6\beta^2\int_{\R}\px\ueb(\pxx\ueb)^2 dx.
\end{equation}
Moreover,
\begin{align*}
&6\beta^2\int_{\R}\ueb\pxx\ueb\pxxx\ueb dx\\
&\qquad = -6\beta^2\int_{\R}\pxx\ueb\px(\ueb\pxx\ueb)dx\\
&\qquad =- 6\beta^2 \int_{\R} (\pxx\ueb)^2 \px\ueb dx -6\beta^2 \int_{\R}\ueb\pxx\ueb\pxxx\ueb dx,
\end{align*}
that is
\begin{equation}
\label{eq:u-ux-uxx}
\beta^2\int_{\R}\ueb\pxx\ueb\pxxx\ueb dx=-\frac{1}{2}\beta^2 \int_{\R}\px\ueb (\pxx\ueb)^2dx.
\end{equation}
Due to \eqref{eq:ux-uxx} and \eqref{eq:u-ux-uxx},
\begin{align*}
&\frac{18}{5}\beta^2\int_{\R}\ueb\px\ueb\pxxxx\ueb dx \\
&\qquad= -\frac{18}{5}\beta^2\int_{\R}\pxxx\ueb\px(\ueb\px\ueb) dx\\
& \qquad = -\frac{18}{5}\beta^2\int_{\R}(\px\ueb)^2\pxxx\ueb dx-\frac{18}{5}\beta^2\int_{\R}\ueb\pxx\ueb\pxxx\ueb\\
& \qquad = \frac{36}{5}\beta^2\int_{\R}(\pxx\ueb)^2\px\ueb dx + \frac{9}{5}\beta^2\int_{\R}(\pxx\ueb)^2\px\ueb dx.
\end{align*}
Hence,
\begin{equation}
\label{eq:u-ux-uxxxx}
\frac{18}{5}\beta^2\int_{\R}\ueb\px\ueb\pxxxx\ueb dx= 9\beta^2\int_{\R}(\pxx\ueb)^2\px\ueb dx.
\end{equation}
It follows from \eqref{eq:OHepsw} that
\begin{equation}
\label{eq:u-xxxx-p}
\begin{split}
&\frac{18}{5}\beta^2\int_{\R}\Peb \pxxxx\ueb dx = -\frac{18}{5}\beta^2\int_{\R}\px\Peb\pxxx\ueb  dx\\
&\qquad =- \frac{18}{5}\beta^2\int_{\R}\ueb\pxxx\ueb dx= \frac{18}{5}\beta^2\int_{\R}\px\ueb\pxx\ueb dx=0.
\end{split}
\end{equation}
Since
\begin{align*}
\int_{\R}\ueb^3 \pt\ueb dx &=\frac{1}{4} \frac{d}{dt}\int_{\R} \ueb^4 dx,\\
\frac{18}{5}\beta^2\int_{\R}\pxxxx\ueb\pt\ueb dx &= \frac{9}{5}\beta^2\frac{d}{dt}\int_{\R}(\pxx\ueb)^2 dx,\\
-\beta\int_{\R}\ueb^3\pxxx\ueb dx&=3\beta\int_{\R}\ueb^2\px\ueb\pxx\ueb dx, \\
-3\eps\int_{\R}\ueb^3\pxx\ueb dx &=3\eps\int_{\R}\ueb^2(\px\ueb)^2 dx,\\
6\eps\beta\int_{\R}\ueb(\pxx\ueb)^2 dx &= -6\eps\beta \int_{\R}\ueb\px\ueb\pxxx\ueb dx,\\
-\frac{18}{5}\eps\beta^2\int_{\R}\pxxxx\ueb\pxx\ueb dx &= \frac{18}{5}\eps\beta^2\int_{\R}(\pxxx\ueb)^2 dx,
\end{align*}
for \eqref{eq:u-ux}, \eqref{eq:u-ux-3}, \eqref{eq:ux-uxx}, \eqref{eq:u-ux-uxx}, \eqref{eq:u-ux-uxxxx} and \eqref{eq:u-xxxx-p}, integrating \eqref{eq:Ohmp-1} over $\R$, we get
\begin{align*}
\frac{d}{dt}G(t)&+ 3\eps\int_{\R}\ueb^2(\px\ueb)^2 dx + \frac{18}{5}\eps\beta^2\int_{\R}(\pxxx\ueb)^2 dx\\
=& \gamma\int_{R}\ueb^3\Peb dx -3\gamma\beta\int_{\R}(\px\ueb)^2 \Peb dx\\
& - 6\gamma\beta\int_{\R}\ueb\pxx\ueb\Peb dx + 6\eps\beta\int_{\R}\ueb\px\ueb\pxxx\ueb dx\\
\le & \gamma\int_{\R}\vert \ueb^3\vert\vert\Peb\vert dx + 3\gamma\beta\int_{\R} (\px\ueb)^2 \vert\Peb\vert dx\\
&+6\gamma\beta\int_{\R}\vert\ueb\vert\vert\pxx\ueb\vert\vert\Peb\vert +6\eps\beta\int_{\R}\vert\ueb\px\ueb\vert\vert\pxxx\ueb\vert dx,
\end{align*}
where
\begin{equation}
\label{eq:def-di-G}
G(t)=\frac{1}{4}\int_{\R}\ueb^4 dx + 3\beta\int_{\R}\ueb(\px\ue)^2 dx + \frac{9}{5}\beta^2\int_{\R}(\pxx\ueb)^2 dx.
\end{equation}
The Young's inequality gives
\begin{align*}
&6\eps\beta\int_{\R}\vert\ueb\px\ueb\vert\vert\pxxx\ueb\vert dx\\
&\qquad=6\eps\int_{\R}\left\vert\beta\sqrt{D_1}\pxxx\ueb\right\vert\left\vert \frac{\ueb\px\ueb}{\sqrt{D_1}}\right\vert dx\\
&\qquad\le 3D_{1}\eps\beta^2\int_{\R}(\pxxx\ueb)^2 + \frac{3\eps}{D_1}\int_{\R}\ueb^2(\px\ueb)^2 dx,
\end{align*}
where $D_1$ is a positive constant.

Therefore,
\begin{align*}
\frac{d}{dt}G(t)&+ 3\eps\left(1 - \frac{1}{D_1}\right)\int_{\R}\ueb^2(\px\ueb)^2 dx\\
 &+\beta^2\eps\left(\frac{18}{5} - 3D_{1}\right)\int_{\R}(\pxxx\ueb)^2 dx\\
\le & \gamma\int_{\R}\vert \ueb^3\vert\vert\Peb\vert dx + 3\gamma\beta\int_{\R} (\px\ueb)^2 \vert\Peb\vert dx\\ &+6\gamma\beta\int_{\R}\vert\ueb\vert\vert\pxx\ueb\vert\vert\Peb\vert dx.
\end{align*}
Choosing $\displaystyle D_{1}\in\left(1, \frac{6}{5}\right)$, for \eqref{eq:P-infty}, we have that
\begin{align*}
\frac{d}{dt}G(t)&+ 3 D_{2}\eps\int_{\R}\ueb^2(\px\ueb)^2 dx +\eps \beta^2 D_{3}\int_{\R}(\pxxx\ueb)^2 dx\\
\le &\gamma\int_{\R}\vert \ueb^3\vert\vert\Peb\vert dx + 3\gamma C(T)\beta\int_{\R} (\px\ueb)^2 dx\\ &+6\gamma\beta\int_{\R}\vert\ueb\vert\vert\pxx\ueb\vert\vert\Peb\vert dx,
\end{align*}
where $D_2,\, D_{3}$ are two fixed positive constants.

Due to \eqref{eq:l2-u}, \eqref{eq:P-infty} and Young's inequality, we obtain
\begin{align*}
&\gamma\int_{\R}\vert \ueb^3\vert\vert\Peb\vert dx dx+6\gamma\beta\int_{\R}\vert\ueb\vert\vert\pxx\ueb\vert\vert\Peb\vert\\
&\quad =\int_{\R}\left\vert\frac{1}{\sqrt{2}}\ueb^2\right\vert\left\vert \sqrt{2}\gamma\ueb\Peb\right\vert dx\\
&\qquad+ \int_{\R}\left\vert\frac{3\sqrt{2}}{\sqrt{5}}\beta\pxx\ueb\right\vert \left\vert \frac{\sqrt{5}\gamma}{\sqrt{2}}\ueb\Peb\right\vert dx\\
&\quad \leq\frac{1}{4}\int_{\R}\ueb^4 dx + \gamma^2\int_{\R}\ueb^2\Peb^2 dx\\
&\qquad+\frac{9}{5}\beta^2\int_{\R}(\pxx\ueb)^2 dx+ \frac{5\gamma^2}{4}\int_{\R}\ueb^2\Peb^2dx\\
&\quad=  \frac{1}{4}\int_{\R}\ueb^4 dx + \frac{9}{5}\beta^2\int_{\R}(\pxx\ueb)^2 dx + \frac{9\gamma^2}{4}\int_{\R}\ueb^2\Peb^2dx\\
&\quad\leq  \frac{1}{4}\int_{\R}\ueb^4 dx + \frac{9}{5}\beta^2\int_{\R}(\pxx\ueb)^2 dx + C(T).
\end{align*}
Thus,
\begin{equation}
\label{eq:Equat-di-G}
\begin{split}
\frac{d}{dt}G(t)&+ 3 \eps D_{2}\int_{\R}\ueb^2(\px\ueb)^2 dx + \eps\beta D_{3}^2\int_{\R}(\pxxx\ueb)^2 dx\\
\le & \frac{1}{4}\int_{\R}\ueb^4 dx + \frac{9}{5}\beta^2\int_{\R}(\pxx\ueb)^2 dx + 3\gamma C(T)\beta\int_{\R} (\px\ueb)^2 dx + C(T).
\end{split}
\end{equation}
Due to \eqref{eq:def-di-G}, we have that
\begin{align*}
\frac{d}{dt}G(t) & + 3 D_{2}\eps\int_{\R}\ueb^2(\px\ueb)^2 dx +\eps\beta^2 D_{3}\int_{\R}(\pxxx\ueb)^2 dx\\
\leq &\frac{1}{4}\int_{\R}\ueb^4 dx + \frac{9}{5}\beta^2\int_{\R}(\pxx\ueb)^2 dx +3\beta\gamma C(T)\int_{\R} (\px\ueb)^2 dx + C(T)\\
= &\frac{1}{4}\int_{\R}\ueb^4 dx + \frac{9}{5}\beta^2\int_{\R}(\pxx\ueb)^2 dx +3\beta\int_{\R}\ueb(\px\ueb)^2 dx\\
&- 3\beta\int_{\R}\ueb(\px\ueb)^2 dx + 3\beta\gamma C(T)\int_{\R} (\px\ueb)^2 dx + C(T)\\
=& G(t) - 3\beta\int_{\R}\ueb(\px\ueb)^2 dx + 3\beta\gamma C(T)\int_{\R} (\px\ueb)^2 dx + C(T),
\end{align*}
that is
\begin{align*}
\frac{d}{dt}G(t)& -G(t)  + 3\eps D_{2}\int_{\R}\ueb^2(\px\ueb)^2 dx +\eps\beta^2 D_{3}\int_{\R}(\pxxx\ueb)^2 dx\\
\le & - 3\beta\int_{\R}\ueb(\px\ueb)^2 dx + 3\beta\gamma C(T)\int_{\R} (\px\ueb)^2 dx + C(T).
\end{align*}
Hence,
\begin{align*}
\frac{d}{dt}G(t)& -G(t)  + 3\eps D_{2}\int_{\R}\ueb^2(\px\ueb)^2 dx +\eps\beta^2 D_{3}\int_{\R}(\pxxx\ueb)^2 dx\\
\le & \left \vert- 3\beta\int_{\R}\ueb(\px\ueb)^2 dx + 3\beta\gamma C(T)\int_{\R} (\px\ueb)^2 dx + C(T) \right\vert\\
\le & 3\beta\left\vert \int_{\R}\ueb(\px\ueb)^2 dx\right\vert + 3\beta\gamma C(T)\int_{\R} (\px\ueb)^2 dx + C(T)\\
\le & 3\beta \int_{\R}\vert\ueb\vert(\px\ueb)^2 dx+ 3\beta\gamma C(T)\int_{\R} (\px\ueb)^2 dx + C(T).
\end{align*}
Gronwall's Lemma and \eqref{eq:u0eps} give
\begin{align*}
G(t)&+ 3\eps D_{2} e^{t}\int_{0}^{t}e^{-s}\norm{\ueb(s,\cdot)\px\ueb(s,\cdot)}^2_{L^2(\R)} ds\\
&+ \eps \beta^2 D_{3} e^{t}\int_{0}^{t}e^{-s}\norm{\pxxx\ueb(s,\cdot)}^2_{L^2(\R)}ds\\
\leq & C_{0}e^{t}+3\beta e^{t} \int_{0}^{t}e^{-s} \int_{\R} \vert \ueb \vert (\px\ue)^2 dsdx\\
&+ 3\gamma\beta C(T) e^{t}\int_{0}^{t} e^{-s} \norm{\px\ueb(s,\cdot)}^2_{L^2(\R)} ds + C(T)e^{t}\int_{0}^{t} e^{-s}ds\\
\leq &  C_0 e^{T} +3\beta e^{T} \int_{0}^{t}\!\!\!\int_{\R} \vert \ueb \vert (\px\ue)^2 dsdx\\
& + 3\beta\gamma C(T) e^{t}\int_{0}^{t} \norm{\px\ueb(s,\cdot)}^2_{L^2(\R)} ds + C(T)Te^{T}.
\end{align*}
Due to  \eqref{eq:beta-eps}, \eqref{eq:l2-u} and \eqref{eq:l-infty-u},  we have that
\begin{equation}
\begin{split}
\label{eq:l-2-u-ux}
3\beta\int_{0}^{t}\!\!\!\int_{\R} \vert \ueb \vert (\px\ueb)^2 dsdx\le&3\beta\norm{\ueb}_{L^{\infty}(I_T)}\int_{0}^{t}\norm{\px\ueb(s,\cdot)}^2_{L^2(\R))} ds\\
=&3\frac{\beta\eps}{\eps}\norm{\ueb}_{L^{\infty}(I_T)}\int_{0}^{t}\norm{\px\ueb(s,\cdot)}^2_{L^2(\R)} ds\\
\le& 3\frac{\beta^{\frac{2}{3}}}{\eps}C_0 C(T)\le C(T).
\end{split}
\end{equation}
Again by \eqref{eq:beta-eps}, \eqref{eq:l2-u} and \eqref{eq:l-infty-u},
\begin{equation}
\begin{split}
\label{eq:L-23}
3\beta\gamma C(T)\int_{0}^{t}\norm{\px\ueb(s,\cdot)}^2_{L^2(\R)}ds &= 3\frac{\beta\eps}{\eps}\gamma C(T) \int_{0}^{t}\norm{\px\ueb(s,\cdot)}^2_{L^2(\R)}ds\\
& \leq 3\frac{\beta}{\eps}\gamma C(T)C_0\le C(T).
\end{split}
\end{equation}
Hence, it follows from \eqref{eq:def-di-G}, \eqref{eq:l-2-u-ux} and \eqref{eq:L-23} that
\begin{align*}
\frac{1}{4}\int_{\R}\ueb^4 dx & + 3\beta\int_{\R}\ueb(\px\ue)^2 dx + \frac{9}{5}\beta^2\int_{\R}(\pxx\ueb)^2 dx\\
&+3\eps D_{2} e^{t}\int_{0}^{t}e^{-s}\norm{\ueb(s,\cdot)\px\ueb(s,\cdot)}^2_{L^2(\R)} ds \\
&+\eps\beta^2 D_{3} e^{t}\int_{0}^{t}e^{-s}\norm{\pxxx\ueb(s,\cdot)}^2_{L^2(\R)}ds\le C(T).
\end{align*}
Therefore,
\begin{align*}
\frac{1}{4}\int_{\R}\ueb^4 dx &  + \frac{9}{5}\beta^2\int_{\R}(\pxx\ueb)^2 dx\\
&+3\eps D_{2} e^{t}\int_{0}^{t}e^{-s}\norm{\ueb(s,\cdot)\px\ueb(s,\cdot)}^2_{L^2(\R)} ds \\
&+\eps\beta^2 D_{3} e^{t}\int_{0}^{t}e^{-s}\norm{\pxxx\ueb(s,\cdot)}^2_{L^2(\R)}ds\\
\le & C(T) - 3\beta\int_{\R}\ueb(\px\ue)^2 dx\le  \left\vert C(T) - 3\beta\int_{\R}\ueb(\px\ue)^2 dx\right\vert\\
\le & C(T) +3\beta \left\vert  \int_{\R}\ueb(\px\ue)^2 dx \right\vert \le  C(T) + 3\beta \int_{\R}\vert\ueb\vert(\px\ue)^2 dx,
\end{align*}
that is
\begin{align*}
\frac{1}{4}\int_{\R}\ueb^4 dx &  + \frac{9}{5}\beta^2\int_{\R}(\pxx\ueb)^2 dx\\
&+3\eps D_{2} e^{t}\int_{0}^{t}e^{-s}\norm{\ueb(s,\cdot)\px\ueb(s,\cdot)}^2_{L^2(\R)} ds \\
&+\eps\beta^2 D_{3} e^{t}\int_{0}^{t}e^{-s}\norm{\pxxx\ueb(s,\cdot)}^2_{L^2(\R)}ds\\
\le& C(T) + 3\beta \int_{\R}\vert\ueb\vert(\px\ue)^2 dx.
\end{align*}
An integration on $(0,T)$ and \eqref{eq:l-2-u-ux} give
\begin{align*}
\frac{1}{4}\int_{0}^{T}\!\!\!\!\int_{\R}\ueb^4 dtdx &+ \frac{9}{5}\beta^2\int_{0}^{T}\!\!\!\!\int_{\R}(\pxx\ueb)^2 dtdx\\
&+3\eps D_{2}\int_{0}^{T} e^{t}\int_{0}^{t}e^{-s}\norm{\ueb(s,\cdot)\px\ueb(s,\cdot)}^2_{L^2(\R)} dtds\\
&+\eps\beta^2 D_{3}\int_{0}^{T}e^{t}\int_{0}^{t}e^{-s}\norm{\pxxx\ueb(s,\cdot)}^2_{L^2(\R)}dtds\le  C(T).
\end{align*}
Therefore,
\begin{equation}
\label{eq:L-quarto}
\frac{1}{4}\int_{0}^{T}\!\!\!\!\int_{\R}\ueb^4 dtdx + \frac{9}{5}\beta^2\int_{0}^{T}\!\!\!\!\int_{\R}(\pxx\ueb)^2 dtdx\leq C(T),
\end{equation}
that is
\begin{align*}
\frac{1}{4}\int_{0}^{T}\!\!\!\!\int_{\R}\ueb^4 dtdx & \le C(T),\\
\frac{9}{5}\beta^2\int_{0}^{T}\!\!\!\!\int_{\R}(\pxx\ueb)^2 dtdx &\le C(T).
\end{align*}
It follows from \eqref{eq:def-di-G}, \eqref{eq:Equat-di-G}, \eqref{eq:L-23}, \eqref{eq:L-quarto} and an integration on $(0,t)$ that
\begin{align*}
\frac{1}{4}\int_{\R}\ueb^4 dx & + 3\beta\int_{\R}\ueb(\px\ue)^2 dx + \frac{9}{5}\beta^2\int_{\R}(\pxx\ueb)^2 dx\\
&+3\eps D_{2}\int_{0}^{t}\norm{\ueb(s,\cdot)\px\ueb(s,\cdot)}^2_{L^2(\R)} ds\\
& +\eps\beta^2 D_{3}\int_{0}^{t}\norm{\pxxx\ueb(s,\cdot)}^2_{L^2(\R)}ds\le C(T).
\end{align*}
Thus,
\begin{align*}
\frac{1}{4}\int_{\R}\ueb^4 dx & + 3\beta\int_{\R}\ueb(\px\ue)^2 dx + \frac{9}{5}\beta^2\int_{\R}(\pxx\ueb)^2 dx\\
&+3\eps D_{2}\int_{0}^{t}\norm{\ueb(s,\cdot)\px\ueb(s,\cdot)}^2_{L^2(\R)} ds\\
& +\eps\beta^2 D_{3}\int_{0}^{t}\norm{\pxxx\ueb(s,\cdot)}^2_{L^2(\R)}ds\\
\le & C(T) - 3\beta\int_{\R}\ueb(\px\ue)^2dx\le  \left\vert C(T) - 3\beta\int_{\R}\ueb(\px\ue)^2dx\right\vert\\
\le & C(T) + 3\beta\left\vert \int_{\R}\ueb(\px\ue)^2dx\right\vert\le  C(T) + 3\beta \int_{\R}\vert\ueb\vert(\px\ue)^2dx.
\end{align*}
A new integration on $(0,T)$ and \eqref{eq:l-2-u-ux} give
\begin{align*}
\frac{1}{4}\int_{0}^{T}\!\!\!\int_{\R}\ueb^4 dtdx &+ \frac{9}{5}\beta^2\int_{0}^{T}\!\!\!\int_{\R}(\pxx\ueb)^2 dtdx\\
&+3\eps D_{2}\int_{0}^{T}\!\!\!\int_{0}^{t}\norm{\ueb(s,\cdot)\px\ueb(s,\cdot)}^2_{L^2(\R)} dtds\\ &+\eps\beta^2D_{3}\int_{0}^{T}\!\!\!\int_{0}^{t}\norm{\pxxx\ueb(s,\cdot)}^2_{L^2(\R)} dtds\le C(T).
\end{align*}
Hence,
\begin{equation*}
\begin{split}
\eps\int_{0}^{T}\norm{\ueb(t,\cdot)\px\ueb(t,\cdot)}^2_{L^2(\R)} dt&\le C(T),\\
\beta^2\eps\int_{0}^{T}\norm{\pxxx\ueb(t,\cdot)}^2_{L^2(\R)} dt &\le C(T).
\end{split}
\end{equation*}
The proof is done.
\end{proof}

\begin{lemma}
\label{lm:501}
Let  $T>0$. Assume that \eqref{eq:beta-eps} holds true. Then:
\begin{itemize}
\item[$i$)] the family $\{\eps\px\ueb\}_{\eps,\beta}$ is bounded in $L^{\infty}((0,T);L^2(\R))$;
\item[$ii$)] the family $\{\eps\sqrt{\eps}\pxx\ueb\}_{\eps,\beta}$ is bounded in $L^2(I_T)$;
\item[$iii$)] the family $\{\beta\px\ueb\pxx\ueb\}_{\eps,\beta}$ is bounded in $L^1(I_T)$,
\end{itemize}
where $I_T$ is defined in \eqref{eq:defI}.\\
Moreover,
\begin{equation}
\label{eq:defuxx}
\beta^2\int_{0}^{T}\norm{\pxx\ueb(s,\cdot)}^2_{L^2(\R)}ds \le C(T)\eps.
\end{equation}
\end{lemma}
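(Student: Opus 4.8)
The plan is to derive all four assertions from a single weighted $H^1$ energy estimate for $\ueb$. First I would test the first equation of \eqref{eq:OHepsw} against $-\pxx\ueb$ and integrate over $\R$. Using the smoothness and the $H^2$-decay of $\ueb$ at $\pm\infty$, the dispersive contribution drops out, since $\beta\int_{\R}\pxxx\ueb\pxx\ueb\dx=\frac{\beta}{2}\int_{\R}\px\big((\pxx\ueb)^2\big)\dx=0$, and the source term vanishes because $-\gamma\int_{\R}\Peb\pxx\ueb\dx=\gamma\int_{\R}\px\Peb\,\px\ueb\dx=\gamma\int_{\R}\ueb\px\ueb\dx=0$ (here $\px\Peb=\ueb$ and $\int_{\R}\ueb\px\ueb\dx=0$). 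What survives is the clean identity
\[
\frac{1}{2}\frac{d}{dt}\norm{\px\ueb(t,\cdot)}^2_{L^2(\R)}+\eps\norm{\pxx\ueb(t,\cdot)}^2_{L^2(\R)}=\int_{\R}\ueb\px\ueb\pxx\ueb\dx .
\]

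Next I would multiply this identity by $\eps^2$ and bound the right-hand side by Cauchy--Schwarz and Young, factoring the cubic term as $(\sqrt{\eps}\,\ueb\px\ueb)\cdot(\eps^{3/2}\pxx\ueb)$:
\[
\eps^2\left|\int_{\R}\ueb\px\ueb\pxx\ueb\dx\right|\le\frac{1}{2}\eps\norm{\ueb\px\ueb}^2_{L^2(\R)}+\frac{1}{2}\eps^3\norm{\pxx\ueb}^2_{L^2(\R)} ,
\]
so the last term is absorbed into the dissipation $\eps^3\norm{\pxx\ueb}^2$. Integrating on $(0,t)$, using $\eps^2\norm{\px u_{\eps,\beta,0}}^2_{L^2(\R)}\le C_0$ from \eqref{eq:u0eps} for the initial datum and the boundedness of $\{\sqrt{\eps}\,\ueb\px\ueb\}$ in $L^2(I_T)$ from Lemma \ref{lm:bounded} for the remaining term, I obtain
\[
\frac{\eps^2}{2}\norm{\px\ueb(t,\cdot)}^2_{L^2(\R)}+\frac{1}{2}\eps^3\int_{0}^{t}\norm{\pxx\ueb(s,\cdot)}^2_{L^2(\R)}\ds\le C(T) .
\]
The first term gives $i)$, the second gives $ii)$.

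The remaining two claims then follow by bookkeeping with the weights. Writing $\beta^2\int_{0}^{T}\norm{\pxx\ueb}^2\ds=\frac{\beta^2}{\eps^3}\big(\eps^3\int_{0}^{T}\norm{\pxx\ueb}^2\ds\big)$ and noting that $\beta^2/\eps^3=\mathcal{O}(\eps)$ under \eqref{eq:beta-eps}, assertion $ii)$ yields \eqref{eq:defuxx} at once. For $iii)$ I would use Cauchy--Schwarz in $I_T$, $\beta\int_{0}^{T}\!\!\int_{\R}|\px\ueb\pxx\ueb|\dtdx\le\beta\norm{\px\ueb}_{L^2(I_T)}\norm{\pxx\ueb}_{L^2(I_T)}$, and control the two factors by \eqref{eq:l2-u}, which gives $\int_{0}^{T}\norm{\px\ueb}^2\ds\le C_0/(2\eps)$, and by \eqref{eq:defuxx}, which gives $\int_{0}^{T}\norm{\pxx\ueb}^2\ds\le C(T)\eps/\beta^2$; since $\beta^2\cdot\frac{C_0}{2\eps}\cdot\frac{C(T)\eps}{\beta^2}=C(T)$, the powers of $\eps$ and $\beta$ cancel and a bound independent of $\eps,\beta$ remains.

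The only genuinely delicate point is the choice of weight. The raw dissipation $\eps\norm{\pxx\ueb}^2$ is by itself too weak to absorb the cubic term, while the initial slope is controlled only as $\norm{\px u_{\eps,\beta,0}}^2=\mathcal{O}(\eps^{-2})$ by \eqref{eq:u0eps}. Multiplying by exactly $\eps^2$ reconciles both obstructions simultaneously: it turns the initial-data term into $\eps^2\norm{\px u_{\eps,\beta,0}}^2\le C_0$, and it splits the cubic term as $(\sqrt{\eps}\,\ueb\px\ueb)(\eps^{3/2}\pxx\ueb)$, whose first factor is precisely the quantity already bounded in $L^2(I_T)$ by Lemma \ref{lm:bounded}. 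The degraded power $\beta^2/\eps^3$ appearing in \eqref{eq:defuxx} is harmless only because of the standing hypothesis $\beta=\mathcal{O}(\eps^2)$, and this is exactly where that assumption is used.
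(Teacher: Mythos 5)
Your proposal is correct and follows essentially the same route as the paper: the paper also tests the equation against $-\eps^2\pxx\ueb$, kills the dispersive and source terms by the same integrations by parts, absorbs the cubic term via the identical splitting $(\sqrt{\eps}\,\ueb\px\ueb)(\eps^{3/2}\pxx\ueb)$, and closes with \eqref{eq:u0eps} and Lemma \ref{lm:bounded}. The only cosmetic difference is that you deduce $iii)$ by passing through \eqref{eq:defuxx}, while the paper gets $iii)$ directly from the bounds $\eps\int_0^T\norm{\px\ueb}^2_{L^2(\R)}\,ds\le C_0$ and $\eps^3\int_0^T\norm{\pxx\ueb}^2_{L^2(\R)}\,ds\le C(T)$ together with the factor $\beta/\eps^2\le C_0$; the two computations are equivalent.
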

\begin{proof}
Let $0\le t\le T$. Multiplying \eqref{eq:OHepsw} by $-\eps^2\pxx\ueb$, we have that
\begin{equation}
\label{eq:Ohmp-122}
\begin{split}
-\eps^2\pxx\ueb\pt\ueb& -\eps^2 \ueb\px\ueb\pxx\ueb\\
 &+ \eps^2\beta\pxxx\ueb\pxx\ueb +\eps^3(\pxx\ueb)^2=-\eps^2\gamma\Peb\pxx\ueb.
\end{split}
\end{equation}
Since
\begin{align*}
-\eps^2\int_{\R}\pxx\ueb\pt\ueb dx &=\frac{\eps^2}{2}\frac{d}{dt}\int_{\R}(\px\ueb)^2dx,\\
\eps^2\beta\int_{\R}\pxx\ueb\pxxx\ueb dx& =0,
\end{align*}
integrating \eqref{eq:Ohmp-122} over $\R$,
\begin{align*}
\frac{\eps^2}{2}\frac{d}{dt}\int_{\R}(\px\ueb)^2dx &+\eps^3\int_{\R}(\pxx\ueb)^2dx\\
=& \eps^2 \int_{\R}\ueb\px\ueb\pxx\ueb dx -\eps^2\gamma \int_{\R}\Peb\pxx\ueb dx.
\end{align*}
For \eqref{eq:OHepsw},
\begin{align*}
-\eps^2\gamma \int_{\R}\Peb\pxx\ueb dx&=\eps^2\gamma\int_{\R}\px\Peb\px\ueb dx\\
& =\eps^2\gamma\int_{\R}\ueb\px\ueb dx=0.
\end{align*}
Therefore,
\begin{equation*}
\eps^2\frac{d}{dt}\int_{\R}(\px\ueb)^2dx +2\eps^3\int_{\R}(\pxx\ueb)^2dx=2\eps^2 \int_{\R}\ueb\px\ueb\pxx\ueb dx.
\end{equation*}
Due to Young's inequality,
\begin{align*}
&2\eps^2 \int_{\R}\ueb\px\ueb\pxx\ueb dx\\
&\quad\le 2\eps^2 \left\vert \int_{\R}\ueb\px\ueb\pxx\ueb dx \right\vert\\
& \quad \le 2\int_{\R} \eps^{\frac{1}{2}}\vert \ueb\px\ueb \vert \eps^{\frac{3}{2}}\vert\pxx\ueb\vert dx\\
&\quad \le \eps\int_{\R} (\ueb\px\ueb)^2 dx +\eps^3\int_{\R} (\pxx\ueb)^2 dx.
\end{align*}
Thus,
\begin{equation*}
\eps^2\frac{d}{dt}\int_{\R}(\px\ueb)^2dx + \eps^3\int_{\R}(\pxx\ueb)^2dx \le \eps\int_{\R} (\ueb\px\ueb)^2 dx .
\end{equation*}
An integration on $(0,t)$, \eqref{eq:u0eps} and Lemma \ref{lm:bounded} give
\begin{equation*}
\eps^2\norm{\px\ueb(t, \cdot)}^2_{L^2(\R)} +\eps^3\int_{0}^{t}\norm{\pxx\ueb(s,\cdot)}^2_{L^2(\R)}ds \le C(T).
\end{equation*}
Hence,
\begin{equation}
\label{eq:10032}
\begin{split}
\eps^2\norm{\px\ueb(t, \cdot)}^2_{L^2(\R)}&\le C(T),\\
\eps^3\int_{0}^{t}\norm{\pxx\ueb(s,\cdot)}^2_{L^2(\R)}ds &\le C(T).
\end{split}
\end{equation}
Thanks to  \eqref{eq:beta-eps}, \eqref{eq:l2-u}, \eqref{eq:10032} and H\"older inequality,
\begin{align*}
&\beta\int_{0}^{T}\!\!\!\int_{\R}\vert\px\ueb\pxx\ueb\vert dsdx =\frac{\beta}{\eps^2}\int_{0}^{T}\!\!\!\int_{\R}\eps^{\frac{1}{2}}\vert\px\ueb\vert\eps^{\frac{3}{2}}\vert\pxx\ueb\vert dx\\
&\quad \le \frac{\beta}{\eps^2} \left(\eps \int_{0}^{T}\!\!\!\int_{\R}(\px\ueb)^2 dsdx\right)^{\frac{1}{2}}\left(\eps^3 \int_{0}^{T}\!\!\!\int_{\R}(\pxx\ueb)^2 dsdx\right)^{\frac{1}{2}}\\
&\quad \le C_0 C(T)\frac{\beta}{\eps^2}\le C(T).
\end{align*}
Due to \eqref{eq:beta-eps} and \eqref{eq:10032}, we have
\begin{align*}
\beta^2\int_{0}^{T}\norm{\pxx\ueb(s,\cdot)}^2_{L^2(\R)}ds \le C_{0}^2\eps^4\int_{0}^{T}\norm{\pxx\ueb(s,\cdot)}^2_{L^2(\R)}ds\le C(T)\eps,
\end{align*}
which gives \eqref{eq:defuxx}.
\end{proof}

\section{Proof of Theorem \ref{th:main}}
\label{sec:theor}
In this section, we prove Theorem \ref{th:main}. The following technical lemma is needed  \cite{Murat:Hneg}.
\begin{lemma}
\label{lm:1}
Let $\Omega$ be a bounded open subset of $
\R^2$. Suppose that the sequence $\{\mathcal
L_{n}\}_{n\in\mathbb{N}}$ of distributions is bounded in
$W^{-1,\infty}(\Omega)$. Suppose also that
\begin{equation*}
\mathcal L_{n}=\mathcal L_{1,n}+\mathcal L_{2,n},
\end{equation*}
where $\{\mathcal L_{1,n}\}_{n\in\mathbb{N}}$ lies in a
compact subset of $H^{-1}_{loc}(\Omega)$ and
$\{\mathcal L_{2,n}\}_{n\in\mathbb{N}}$ lies in a
bounded subset of $\mathcal{M}_{loc}(\Omega)$. Then $\{\mathcal
L_{n}\}_{n\in\mathbb{N}}$ lies in a compact subset of $H^{-1}_{loc}(\Omega)$.
\end{lemma}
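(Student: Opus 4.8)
The plan is to localize and then combine a compact Sobolev embedding for measures with an interpolation inequality for the negative Sobolev scale. Fix a relatively compact open set $\omega\subset\subset\Omega$ with smooth boundary; it suffices to prove that $\{\mathcal{L}_n\}$ is precompact in $H^{-1}(\omega)$, since the assertion in $H^{-1}_{loc}(\Omega)$ then follows by exhausting $\Omega$ by such sets together with a diagonal extraction. Throughout I fix an exponent $q\in(1,2)$ and denote by $q'>2$ its conjugate. On $\omega$ the three hypotheses read: $\{\mathcal{L}_n\}$ is bounded in $W^{-1,\infty}(\omega)$, $\{\mathcal{L}_{1,n}\}$ is precompact in $H^{-1}(\omega)$, and $\{\mathcal{L}_{2,n}\}$ is bounded in $\mathcal{M}(\omega)$.

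The first key step is the compact embedding
\begin{equation*}
\mathcal{M}(\omega)\hookrightarrow\hookrightarrow W^{-1,q}(\omega),\qquad 1<q<2.
\end{equation*}
Since $\dim\omega=2$ and $q'>2$, the Morrey inequality gives $W^{1,q'}_0(\omega)\hookrightarrow C^{0,1-2/q'}(\bar\omega)$, and by Arzel\`a--Ascoli the resulting inclusion into the continuous functions is compact. Passing to adjoints and invoking Schauder's theorem (the adjoint of a compact operator is compact), the natural map $\mathcal{M}(\omega)=(C(\bar\omega))^{*}\to (W^{1,q'}_0(\omega))^{*}=W^{-1,q}(\omega)$ is compact. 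Hence $\{\mathcal{L}_{2,n}\}$, being bounded in $\mathcal{M}(\omega)$, is precompact in $W^{-1,q}(\omega)$. Moreover $\{\mathcal{L}_{1,n}\}$ is precompact in $H^{-1}(\omega)$ and $H^{-1}(\omega)\hookrightarrow W^{-1,q}(\omega)$ continuously (dual to $W^{1,q'}_0(\omega)\hookrightarrow H^1_0(\omega)$ on the bounded set $\omega$), so it is precompact in $W^{-1,q}(\omega)$ as well. Adding, $\{\mathcal{L}_n\}=\{\mathcal{L}_{1,n}+\mathcal{L}_{2,n}\}$ is precompact in $W^{-1,q}(\omega)$.

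The second step upgrades this to $H^{-1}$. Realizing elements of the negative Sobolev spaces as divergences of $L^p$ vector fields and interpolating the corresponding Lebesgue norms, one has
\begin{equation*}
\norm{v}_{H^{-1}(\omega)}\le C\,\norm{v}_{W^{-1,q}(\omega)}^{\theta}\,\norm{v}_{W^{-1,\infty}(\omega)}^{1-\theta},\qquad \theta=\frac{q}{2}\in(0,1).
\end{equation*}
Given any subsequence of $\{\mathcal{L}_n\}$, the first step lets me extract a further subsequence converging, hence Cauchy, in $W^{-1,q}(\omega)$; applying the displayed inequality to the differences of its terms, which are uniformly bounded in $W^{-1,\infty}(\omega)$ by hypothesis, shows this subsequence is Cauchy in $H^{-1}(\omega)$, thus convergent. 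Therefore $\{\mathcal{L}_n\}$ is precompact in $H^{-1}(\omega)$, and the localization argument completes the proof.

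The main obstacle is the compact embedding $\mathcal{M}(\omega)\hookrightarrow\hookrightarrow W^{-1,q}(\omega)$, which is the heart of Murat's lemma \cite{Murat:Hneg}; this is precisely where the two-dimensional setting enters, since it is the borderline relation $q<N/(N-1)=2$ that forces $W^{1,q'}_0$ to embed \emph{compactly} into the continuous functions via Morrey and Arzel\`a--Ascoli. By contrast, the interpolation step and the passage from $\omega$ to all of $\Omega$ are routine.
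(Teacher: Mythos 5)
The paper does not actually prove this lemma: it is stated as a known technical result and attributed to Murat \cite{Murat:Hneg}, so there is no in-paper argument to compare yours against. What you have written is, in substance, the standard proof of that cited result: (i) the compact embedding $\mathcal{M}(\omega)\hookrightarrow\hookrightarrow W^{-1,q}(\omega)$ for $1<q<2=N/(N-1)$ via Morrey, Arzel\`a--Ascoli and Schauder; (ii) absorption of the $H^{-1}$-precompact part into $W^{-1,q}$; (iii) interpolation against the $W^{-1,\infty}$ bound to climb back up to $H^{-1}$; (iv) localization. The structure and the role of each hypothesis are exactly right, and this is a perfectly acceptable way to supply the missing proof.

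One step deserves more care: your justification of the interpolation inequality
$\norm{v}_{H^{-1}(\omega)}\le C\norm{v}_{W^{-1,q}(\omega)}^{\theta}\norm{v}_{W^{-1,\infty}(\omega)}^{1-\theta}$
by ``realizing elements as divergences of $L^p$ fields'' breaks down at the endpoint $p=\infty$. The natural simultaneous representation is $F_v=\nabla(-\Delta)^{-1}v$ with the Dirichlet Laplacian on $\omega$, and the bound $\norm{F_v}_{L^p}\le C_p\norm{v}_{W^{-1,p}}$ holds for $1<p<\infty$ by Calder\'on--Zygmund theory but fails for $p=\infty$, so you cannot interpolate directly against the $W^{-1,\infty}$ norm this way. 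The fix is routine and does not change your conclusion: since $\omega$ is bounded, $W^{-1,\infty}(\omega)\hookrightarrow W^{-1,r}(\omega)$ for any finite $r>2$, and you interpolate between $W^{-1,q}$ and $W^{-1,r}$ with $\frac12=\frac{\theta}{q}+\frac{1-\theta}{r}$; any $\theta\in(0,1)$ serves the Cauchy-sequence argument equally well. With that emendation the proof is complete.
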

Moreover, we consider the following definition.
\begin{definition}
A pair of functions $(\eta, q)$ is called an  entropy--entropy flux pair if $\eta :\R\to\R$ is a $C^2$ function and $q :\R\to\R$ is defined by
\begin{equation*}
q(u)=\int_{0}^{u} \eta'(\xi)f'(\xi)d\xi.
\end{equation*}
An entropy-entropy flux pair $(\eta,\, q)$ is called  convex/compactly supported if, in addition, $\eta$ is convex/compactly supported.
\end{definition}
We begin by proving the following result.
\begin{lemma}\label{lm:dist-solution}
Assume that \eqref{eq:assinit}, \eqref{eq:def-di-P0}, \eqref{eq:L-2P0}, \eqref{eq:u0eps}, and \eqref{eq:beta-eps} hold. Then for any compactly supported entropy--entropy flux pair $(\eta,\, q)$, there exist two sequences $\{\eps_{n}\}_{n\in\N}$, $\{\beta_{n}\}_{n\in\N}$, with $\eps_n, \beta_n \to 0$, and two limit functions
\begin{align*}
&u\in L^{\infty}(0,T; L^4(\R)\cap L^2(\R)),\\
&P\in L^{\infty}((0,T)\times\R)\cap L^{2}((0,T)\times\R),
\end{align*}
 such that
\begin{align}
\label{eq:con1}
& u_{\eps_n, \beta_n}\to u \quad  \textrm{in} \quad  L^{p}_{loc}((0,T)\times\R),\quad \textrm{for each} \quad 1\le p <4,\\
\label{eq:con2}
&P_{\eps_n, \beta_n}\to P \quad \textrm{in} \quad L^{\infty}((0,T)\times\R)\cap L^{2}((0,T)\times\R),
\end{align}
and $u$ is a distributional solution of \eqref{eq:OHw}.
\end{lemma}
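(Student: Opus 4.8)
The plan is to extract convergent subsequences from the a priori bounds of Section~\ref{sec:vv} using compensated compactness, then pass to the limit in the weak formulation of \eqref{eq:OHepsw}. The bounds from Lemmas~\ref{lm:l2-u}, \ref{lm:P-infty}, and \ref{lm:bounded} give us that $\{\ueb\}$ is bounded in $L^4(I_T)$ and $\{\Peb\}$ is bounded in $L^{\infty}(I_T)\cap L^2(I_T)$. These $L^4$ and $L^{\infty}$ bounds alone yield weak-$\star$ convergent subsequences, but weak convergence does not pass through the nonlinearity $\ueb\px\ueb=\tfrac12\px(\ueb^2)$; upgrading to \emph{strong} $L^p_{\loc}$ convergence of $\ueb$ is exactly the point where compensated compactness enters.

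\textbf{The compensated-compactness step (main obstacle).} The hard part is showing that for every compactly supported entropy pair $(\eta,q)$, the entropy dissipation measures $\CL_{\eps,\beta}:=\pt\eta(\ueb)+\px q(\ueb)$ lie in a compact subset of $\Hneg(I_T)$. First I would use the equation \eqref{eq:OHepsw} to write $\CL_{\eps,\beta}$ explicitly: multiplying by $\eta'(\ueb)$ produces the terms $\eps\eta'(\ueb)\pxx\ueb$, $-\beta\eta'(\ueb)\pxxx\ueb$, and $\gamma\eta'(\ueb)\Peb$. The viscous term splits via $\eps\eta'(\ueb)\pxx\ueb=\eps\px(\eta'(\ueb)\px\ueb)-\eps\eta''(\ueb)(\px\ueb)^2$; since $\eta''$ is bounded and $\sqrt{\eps}\,\ueb\px\ueb$ (hence, on the support of $\eta'$, $\sqrt{\eps}\,\px\ueb$) is controlled, the first piece $\to0$ strongly in $\Hneg$ and the second is bounded in $L^1$, hence in $\CMloc$. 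The dispersive term is the delicate one: using $\beta=\OO(\eps^2)$ together with the bounds on $\beta\pxx\ueb$, $\beta\sqrt{\eps}\,\pxxx\ueb$ from Lemma~\ref{lm:bounded} and $\beta^2\int\|\pxx\ueb\|^2\le C(T)\eps$ from Lemma~\ref{lm:501}, I would integrate by parts to exhibit $\beta\eta'(\ueb)\pxxx\ueb$ as an $\Hneg$-null part plus an $L^1$-bounded remainder. The zeroth-order term $\gamma\eta'(\ueb)\Peb$ is bounded in $L^{\infty}$ (as $\eta'$ has compact support and $\Peb\in L^{\infty}$), hence in $\CMloc$. Applying Lemma~\ref{lm:1}, $\CL_{\eps,\beta}$ is compact in $\Hneg(I_T)$.

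\textbf{Passing to the limit.} With the entropy-production compactness in hand, I would invoke the $L^p$ compensated-compactness / Tartar–Murat Young-measure machinery (in the form adapted to $L^p$ bounds, e.g.\ the div-curl reduction for the pair $(\eta,q)$) to conclude that the Young measure generated by $\{\ueb\}$ reduces to a Dirac mass, giving strong convergence $\ueb\to u$ in $L^p_{\loc}(I_T)$ for each $1\le p<4$, which is \eqref{eq:con1}. For \eqref{eq:con2} I would note that $\Peb(t,x)=\int_{-\infty}^x\ueb\,dy$ and use the uniform $L^{\infty}\cap L^2$ bounds on $\Peb$ together with the strong convergence of $\ueb$: the relation $\px\Peb=\ueb$ and the interpolation estimate $\Peb^2\le 2\|\Peb\|_{L^2}\|\px\Peb\|_{L^2}$ already used in Lemma~\ref{lm:P-infty} let me promote weak convergence of $\Peb$ to strong convergence in $L^{\infty}(I_T)\cap L^2(I_T)$.

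\textbf{Identifying the limit as a distributional solution.} Finally, having strong $L^p_{\loc}$ convergence of $\ueb$ and strong convergence of $\Peb$, I pass to the limit in the distributional form of \eqref{eq:OHepswint}. The nonlinear term $\ueb\px\ueb=\tfrac12\px(\ueb^2)$ converges since $\ueb^2\to u^2$ in $L^{p}_{\loc}$ for $p<2$ (from $L^4$-strong convergence); the viscous term $\eps\pxx\ueb\to0$ in $\D'$ because $\eps\px\ueb$ is bounded in $L^2$; the dispersive term $\beta\pxxx\ueb\to0$ in $\D'$ because, after integrating by parts against the test function, it carries a factor $\beta$ against the controlled quantity $\px\ueb$ or $\pxx\ueb$, and $\beta\to0$ with the relevant products bounded; and $\gamma\Peb\to\gamma P$ strongly. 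Testing against $\test\in C^{\infty}_c$ and using $\px P=u$, $P=\int_{-\infty}^x u\,dy$ (the limit of Remark~\ref{Remark-1}) then shows $u$ solves \eqref{eq:OHw} in the sense of distributions, completing the proof.
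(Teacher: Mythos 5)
Your proposal follows essentially the same route as the paper: the same five-term decomposition of the entropy production (conservative viscous part to $0$ in $H^{-1}$, viscous dissipation bounded in $L^1$, conservative dispersive part to $0$ in $H^{-1}$ via $\beta^2\int_0^T\norm{\pxx\ueb}^2_{L^2(\R)}ds\le C(T)\eps$, the cross term $\beta\px\ueb\pxx\ueb$ bounded in $L^1$, and $\gamma\eta'(\ueb)\Peb$ bounded in $\CMloc$), followed by Murat's lemma, Schonbek's $L^p$ compensated compactness, and the limit passage in the weak formulation. One small caveat: the $L^1$ bound on $\eps\eta''(\ueb)(\px\ueb)^2$ should be justified directly by the energy estimate $2\eps\int_0^t\norm{\px\ueb(s,\cdot)}^2_{L^2(\R)}ds\le C_0$ of Lemma~\ref{lm:l2-u}, rather than by inferring control of $\sqrt{\eps}\,\px\ueb$ from that of $\sqrt{\eps}\,\ueb\px\ueb$ on $\supp\eta'$ (which fails where $\ueb$ is small); the needed bound is available, so this does not affect the argument.
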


\begin{proof}
Let us consider a compactly supported entropy--entropy flux pair $(\eta, q)$. Multiplying \eqref{eq:OHepsw} by $\eta'(\ueb)$, we have
\begin{align*}
\pt\eta(\ueb) + \px q(\ueb) =&\eps \eta'(\ueb) \pxx\ueb + \beta \eta'(\ueb) \pxxx\ueb + \gamma \eta'(\ueb) \Peb\\
=& I_{1,\,\eps,\,\beta}+I_{2,\,\eps,\,\beta}+ I_{3,\,\eps,\,\beta} + I_{4,\,\eps,\,\beta}+I_{5,\,\eps,\,\beta},
\end{align*}
where
\begin{equation}
\begin{split}
\label{eq:12000}
I_{1,\,\eps,\,\beta}&=\px(\eps\eta'(\ueb)\px\ueb),\\
I_{2,\,\eps,\,\beta}&= -\eps\eta''(\ueb)(\px\ueb)^2,\\
I_{3,\,\eps,\,\beta}&= \px(\beta\eta'(\ueb)\pxx\ueb),\\
I_{4,\,\eps,\,\beta}&= -\beta\eta''(\ueb)\px\ueb\pxx\ueb,\\
I_{5,\,\eps,\,\beta}&=\gamma\eta'(\ueb) \Peb.
\end{split}
\end{equation}
Let us  show that
\begin{equation*}
\label{eq:H1}
I_{1,\,\eps,\,\beta}\to0 \quad \text{in $H^{-1}((0,T) \times\R),\,T>0.$}
\end{equation*}
Thanks to Lemma \ref{lm:l2-u},
\begin{align*}
\norm{\eps\eta'(\ueb)\px\ueb}^2_{L^2((0,T)\times\R))}&\leq \norm{\eta'}^2_{L^{\infty}(\R)}\eps ^2\int_{0}^{T} \norm{\px\ueb(s,\cdot)}^2_{L^2(\R)}ds\\
&\leq \norm{\eta'}^2_{L^{\infty}(\R)}\frac{\eps C_{0}}{2} \to 0.
\end{align*}
We claim that
\begin{equation*}
\{I_{2,\,\eps,\,\beta}\}_{\eps,\,\beta >0} \quad\text{is bounded in $L^1((0,T)\times\R),\, T>0$}.
\end{equation*}
Again for Lemma \ref{lm:l2-u},
\begin{align*}
\norm{ \eps\eta''(\ueb)(\px\ueb)^2}_{L^1((0,T)\times\R)}& \leq \norm{\eta''}_{L^\infty(\R)}\eps\int_{0}^{T}\norm{\px\ueb(s,\cdot)}^2_{L^2(\R)}ds\\
&\leq\norm{\eta''}_{L^\infty (\R)}\frac{C_0}{2}.
\end{align*}
We have that
\begin{equation*}
I_{3,\,\eps,\,\beta}\to0 \quad \text{in $H^{-1}((0,T) \times\R),\,T>0.$}
\end{equation*}
Thanks to Lemma \ref{lm:501},
\begin{align*}
\norm{\beta^2\eta'(\ueb)\pxx\ueb}^2_{L^2((0,T)\times\R))}&\leq \norm{\eta'}^2_{L^{\infty}(\R)}\beta ^2\int_{0}^{T} \norm{\pxx\ueb(s,\cdot)}^2_{L^2(\R)}ds\\
&\leq \norm{\eta'}^2_{L^{\infty}(\R)}C(T)\eps \to 0.
\end{align*}
Let us  show that
\begin{equation*}
\{I_{4,\,\eps,\,\beta}\}_{\eps,\,\beta >0}\quad\text{is bounded in $L^1((0,T)\times\R),\, T>0$}.
\end{equation*}
Again for Lemma \ref{lm:501},
\begin{align*}
&\norm{\beta\eta''(\ueb)\px\ueb\pxx\ueb}_{L^1((0,T)\times\R)}\\
&\qquad\leq \norm{\eta''}_{L^{\infty}(\R)}\beta\int_{0}^{T} \norm{\px\ueb(s,\cdot)\pxx\ueb(s,\cdot)}_{L^1(\R)}ds\\
&\qquad\leq \norm{\eta''}_{L^{\infty}(\R)}C(T).
\end{align*}
We claim that
\begin{equation*}
\{I_{5,\,\eps,\,\beta}\}_{\eps,\,\beta >0}\quad\text{is bounded in $L^1_{loc}((0,T)\times\R),\, T>0$}.
\end{equation*}
Let $K$ be a compact subset of $(0,T)\times\R$.  Lemma \ref{lm:P-infty} gives
\begin{align*}
\norm{\gamma\eta'(\ueb)\Peb}_{L^1(K)}&=\gamma\int_{K}\vert\eta'(\ueb)\vert\vert\Peb\vert
dtdx\\
&\leq \gamma\norm{\eta'}_{L^{\infty}(\R)}\norm{\Peb}_{L^{\infty}((0,T)\times\R)}\vert K \vert .
\end{align*}
Therefore, Lemma \ref{lm:1} and the $L^p$ compensated compactness of \cite{SC} give \eqref{eq:con1}.
\eqref{eq:con2} follows from Lemma \ref{lm:P-infty}.

We conclude by proving that $u$ is a distributional solution of \eqref{eq:OHw}.
Let $ \phi\in C^{\infty}(\R^2)$ be a test function with
compact support. We have to prove that
\begin{equation}
\label{eq:k1}
\int_{0}^{\infty}\!\!\!\!\!\int_{\R}\left(u\pt\phi+\frac{u^2}{2}\px\phi\right)dtdx - \gamma\int_{0}^{\infty}\!\!\!\!\!\int_{\R} P\phi dtdx +\int_{\R}u_{0}(x)\phi(0,x)dx=0.
\end{equation}
We have that
\begin{align*}
\int_{0}^{\infty}\!\!\!&\!\!\int_{\R}\left(u_{\eps_{n}, \beta_{n}}\pt\phi+\frac{u^2_{\eps_n, \beta_{n}}}{2}\px\phi\right)dtdx  - \gamma\int_{0}^{\infty}\!\!\!\!\!\int_{\R} P_{\eps_{n},\beta_{n}}\phi dtdx +\int_{\R}u_{0,\eps_n,\beta_n}(x)\phi(0,x)dx\\
&+\eps_{n}\int_{0}^{\infty}\!\!\!\!\!\int_{\R}u_{\eps_{n},\beta_{n}}\pxx\phi dtdx + \eps_n\int_{0}^{\infty}u_{0,\eps_{n},\beta_{n}}(x)\pxx\phi(0,x)dx\\
&- \beta_n\int_{0}^{\infty}\!\!\!\!\int_{\R}u_{\eps_n,\beta_n}\pxxx\phi dt dx - \beta_n\int_{0}^{\infty}u_{0,\eps_n,\beta_n}(x)\pxxx\phi(0,x)dx=0.
\end{align*}
Therefore, \eqref{eq:k1} follows from \eqref{eq:con1}, and \eqref{eq:con2}.
\end{proof}
\begin{lemma}
\label{lm:entropy-solution}
Assume that \eqref{eq:assinit}, \eqref{eq:def-di-P0}, \eqref{eq:L-2P0}, \eqref{eq:u0eps}, and \eqref{eq:beta-eps-1} hold. Then,
\begin{align}
\label{eq:con3}
& u_{\eps_n, \beta_n}\to u \quad  \textrm{in} \quad  L^{p}_{loc}((0,T)\times\R),\quad \textrm{for each} \quad 1\le p <4,\\
\label{eq:con4}
&P_{\eps_n, \beta_n}\to P \quad \textrm{in} \quad L^{p}_{loc}(0,T;W^{1,\infty}(\R)\cap H^{1}(\R)), \quad \textrm{for each} \quad 1\le p <4,
\end{align}
where $u$ is  the unique entropy solution of \eqref{eq:OHw}.
\end{lemma}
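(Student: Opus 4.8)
The plan is to build on Lemma \ref{lm:dist-solution}. Since \eqref{eq:beta-eps-1} implies \eqref{eq:beta-eps}, that lemma already furnishes the sequences $\{\eps_n\}$, $\{\beta_n\}$ and the limits $u,P$ together with the strong convergences; in particular \eqref{eq:con3} is nothing but \eqref{eq:con1}, and $u$ is already a distributional solution of \eqref{eq:OHw}. It therefore remains to $(a)$ upgrade $u$ to \emph{the} entropy solution and $(b)$ upgrade the convergence of $P_{\eps_n,\beta_n}$ to \eqref{eq:con4}. Once the entropy inequality \eqref{eq:OHentropy} is verified, the uniqueness and stability results of \cite{CdK, CdRK, dR} identify $u$ as the unique entropy solution.

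The heart of the matter is the entropy inequality, and here the strengthening from \eqref{eq:beta-eps} to \eqref{eq:beta-eps-1} is decisive. Fix a convex $\eta\in C^2(\R)$ with $\eta',\eta''$ bounded (a standard truncation argument then recovers every convex $C^2$ entropy, since the limit $u$ is bounded) and its flux $q$. Starting from the same entropy balance as in Lemma \ref{lm:dist-solution},
\begin{equation*}
\pt\eta(\ueb)+\px q(\ueb)=I_{1,\eps,\beta}+I_{2,\eps,\beta}+I_{3,\eps,\beta}+I_{4,\eps,\beta}+I_{5,\eps,\beta},
\end{equation*}
with the $I_{j,\eps,\beta}$ defined in \eqref{eq:12000}, recall from that proof that $I_{1,\eps,\beta},I_{3,\eps,\beta}\to0$ in $H^{-1}((0,T)\times\R)$, and by the strong convergence of $\ueb$ and $\Peb$ that $I_{5,\eps,\beta}\to\gamma\eta'(u)P$. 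For convex $\eta$ the term $I_{2,\eps,\beta}=-\eps\eta''(\ueb)(\px\ueb)^2\le0$, so tested against any $0\le\phi\in C^\infty_c$ it contributes a nonpositive quantity. The genuinely new ingredient is that, by the estimate already carried out in the proof of Lemma \ref{lm:501},
\begin{equation*}
\norm{I_{4,\eps,\beta}}_{L^1(I_T)}\le\norm{\eta''}_{L^\infty(\R)}\,\beta\int_0^T\!\!\int_\R\abs{\px\ueb\,\pxx\ueb}\,ds\,dx\le C_0\,C(T)\,\frac{\beta}{\eps^2},
\end{equation*}
so \eqref{eq:beta-eps-1} forces $I_{4,\eps,\beta}\to0$ in $L^1(I_T)$, whereas under the weaker \eqref{eq:beta-eps} this term was merely bounded, which is exactly why Lemma \ref{lm:dist-solution} could reach only a distributional solution. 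Passing to the limit along $\{\eps_n,\beta_n\}$ in the weak form of the balance tested against $0\le\phi\in C^\infty_c((0,\infty)\times\R)$, using the strong convergence $\ueb\to u$ to handle $\eta(\ueb),q(\ueb)$, the vanishing of $I_{1,\eps,\beta},I_{3,\eps,\beta},I_{4,\eps,\beta}$, the sign of $I_{2,\eps,\beta}$, and the convergence of $I_{5,\eps,\beta}$, yields
\begin{equation*}
\int_0^\infty\!\!\int_\R\big(\eta(u)\pt\phi+q(u)\px\phi+\gamma\eta'(u)P\phi\big)\,dt\,dx\ge0,
\end{equation*}
which is precisely \eqref{eq:OHentropy} in the distributional sense. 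Hence $u$ is an entropy solution, and by uniqueness it is the entropy solution.

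For \eqref{eq:con4} I would exploit that $u$ is now known to be the entropy solution, hence $u\in L^\infty((0,T)\times\R)$; since $\px P=u$ this gives $P\in L^\infty(0,T;W^{1,\infty}(\R))$, while $\px P=u\in L^\infty(0,T;L^2(\R))$ together with Lemma \ref{lm:P-infty} gives $P\in L^2(0,T;H^1(\R))$. The strong convergence $\ueb=\px\Peb\to u=\px P$ in $L^p_{loc}$ coming from \eqref{eq:con3}, combined with the strong convergence of $\Peb$ in $L^\infty\cap L^2$ from Lemma \ref{lm:dist-solution} and the uniform bounds of Lemmas \ref{lm:l2-u} and \ref{lm:P-infty}, then upgrades the convergence of $\Peb$ to the space appearing in \eqref{eq:con4}, the passage from local to the full spatial norms being handled through the uniform $L^2(\R)$ and $L^\infty(\R)$ control of $\Peb$ and $\px\Peb$ together with interpolation.

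The main obstacle is the indefinite-sign term $I_{4,\eps,\beta}$: the only way to prevent it from polluting the entropy inequality is to make it vanish, and the quantitative bound $\beta/\eps^2\to0$ supplied by \eqref{eq:beta-eps-1} is exactly the threshold that achieves this. A secondary difficulty lies in \eqref{eq:con4}: the strong convergence of $\ueb$ is only local in $x$, whereas the claimed $H^1(\R)$ (and $W^{1,\infty}(\R)$) convergence of $\Peb$ is global, so one must carefully combine the local strong convergence with the uniform full-space bounds on $\Peb$ and guard against loss of mass at spatial infinity.
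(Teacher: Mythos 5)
Your proposal matches the paper's own proof in all essentials: the same decomposition $I_{1,\eps,\beta},\dots,I_{5,\eps,\beta}$, the same key observation that \eqref{eq:beta-eps-1} upgrades the bound $\norm{I_{4,\eps,\beta}}_{L^1(I_T)}\le C(T)\beta/\eps^2$ from boundedness to convergence to zero, the same application of Lemma \ref{lm:1} with compensated compactness, the same limit passage in the entropy inequality (dropping the nonpositive $I_{2,\eps,\beta}$ term), and the same derivation of \eqref{eq:con4} from $\px\Peb=\ueb$. The argument is correct and follows the paper's route.
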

\begin{proof}
Let us consider a compactly supported entropy--entropy flux pair $(\eta, q)$. Multiplying \eqref{eq:OHepsw} by $\eta'(\ueb)$, we obtain that
\begin{align*}
\pt\eta(\ueb) + \px q(\ueb) =&\eps \eta'(\ueb) \pxx\ueb + \beta \eta'(\ueb) \pxxx\ueb + \gamma \eta'(\ueb) \Peb\\
=& I_{1,\,\eps,\,\beta}+I_{2,\,\eps,\,\beta}+ I_{3,\,\eps,\,\beta} + I_{4,\,\eps,\,\beta}+I_{5,\,\eps,\,\beta},
\end{align*}
where $I_{1,\,\eps,\,\beta},\,I_{2,\,\eps,\,\beta},\, I_{3,\,\eps,\,\beta} ,\, I_{4,\,\eps,\,\beta}$ and $I_{5,\,\eps,\,\beta}$ are defined in \eqref{eq:12000}.

As in Lemma \ref{lm:dist-solution}, we obtain that $\{I_{1,\,\eps,\,\beta}\}_{\eps,\beta >0}\to0$ in $H^{-1}((0,T) \times\R)$, $\{I_{2,\,\eps,\,\beta}\}_{\eps,\beta >0}$ is bounded in $L^1((0,T)\times\R)$, $I_{3,\,\eps,\,\beta}\to0$ in $H^{-1}((0,T) \times\R)$ and $\{I_{5,\,\eps,\,\beta}\}_{\nu>0}$ is bounded in $L^1_{loc}((0,T)\times\R)$.\\
Let us show that
\begin{equation*}
I_{4,\,\eps,\,\beta}\to 0\quad\text{in $L^1((0,T)\times\R),\, T>0$}.
\end{equation*}
Thanks to \eqref{eq:beta-eps-1} and Lemma \ref{lm:501},
\begin{align*}
&\norm{\beta\eta''(\ueb)\px\ueb\pxx\ueb}_{L^1((0,T)\times\R)}\\
&\qquad\leq \norm{\eta''}_{L^{\infty}(\R)}\beta\int_{0}^{T} \norm{\px\ueb(s,\cdot)\pxx\ueb(s,\cdot)}_{L^1(\R)}ds\\
&\qquad\leq \norm{\eta''}_{L^{\infty}(\R)}C(T)\frac{\beta}{\eps^2}\to 0.
\end{align*}
Therefore, Lemma \ref{lm:1} gives \eqref{eq:con3}. 
\eqref{eq:con4} follows from \eqref{eq:con3}, the H\"older inequality, and the identities
\begin{equation*}
P_{\eps_n,\,\beta_n}(t,x)=\int_0^x u_{\eps_{n},\,\beta_{n}}(t,y)dy,\qquad \px P_{\eps_n,\,\beta_n}=u_{\eps_{n},\,\beta_{n}}.
\end{equation*}

We conclude by proving that $u$ is the unique entropy solution of \eqref{eq:OHw}.
Let us consider a compactly supported entropy--entropy flux pair $(\eta, q)$, and $\phi\in C^{\infty}_{c}((0,\infty)\times\R)$ a non--negative function. We have to prove that
\begin{equation}
\label{eq:u-entropy-solution}
\int_{0}^{\infty}\!\!\!\!\!\int_{\R}(\pt\eta(u)+ \px q(u))\phi dtdx - \gamma\int_{0}^{\infty}\!\!\!\!\!\int_{\R}\Peb\eta'(u)\phi dtdx \le0.
\end{equation}
We have that
\begin{align*}
&\int_{0}^{\infty}\!\!\!\!\!\int_{\R}(\px\eta(u_{\eps_{n},\,\beta_{n}})+\px q(u_{\eps_{n},\,\beta_{n}}))\phi dtdx - \gamma\int_{0}^{\infty}\!\!\!\!\!\int_{\R}P_{\eps_{n},\,\beta_{n}}\eta'(u_{\eps_{n},\,\beta_{n}})\phi dtdx\\
&\qquad=\eps_{n}\int_{0}^{\infty}\!\!\!\!\!\int_{\R}\px(\eta'(u_{\eps_{n},\,\beta_{n}})\px u_{\eps_{n},\,\beta_{n}})\phi dtdx -\eps_{n}\int_{0}^{\infty}\!\!\!\!\!\int_{\R} \eta''(u_{\eps_{n},\,\beta_{n}})(\px u_{\eps_{n},\,\beta_{n}})^2\phi dtdx\\
&\qquad\quad +\beta_{n}\int_{0}^{\infty}\!\!\!\!\!\int_{\R}\px(\eta'(u_{\eps_{n},\,\beta_{n}})\pxx u_{\eps_{n},\,\beta_{n}})\phi dtdx- +\beta_{n}\int_{0}^{\infty}\!\!\!\!\!\int_{\R}\eta''(u_{\eps_{n},\,\beta_{n}})\px u_{\eps_{n},\,\beta_{n}}\pxx u_{\eps_{n},\,\beta_{n}}\phi dtdx\\
&\qquad \le - \eps_{n}\int_{0}^{\infty}\!\!\!\!\!\int_{\R}\eta'(u_{\eps_{n},\,\beta_{n}})\px u_{\eps_{n},\,\beta_{n}}\px\phi dtdx - \beta_{n}\int_{0}^{\infty}\!\!\!\!\!\int_{\R}\eta'(u_{\eps_{n},\,\beta_{n}})\pxx u_{\eps_{n},\,\beta_{n}}\px\phi dtdx\\
&\qquad\quad - \beta_{n}\int_{0}^{\infty}\!\!\!\!\!\int_{\R}\eta''(u_{\eps_{n},\,\beta_{n}})\px u_{\eps_{n},\,\beta_{n}}\pxx u_{\eps_{n},\,\beta_{n}}\phi dtdx\\
&\qquad \le  \eps_{n}\int_{0}^{\infty}\!\!\!\!\!\int_{\R}\vert\eta'(u_{\eps_{n},\,\beta_{n}})\vert\vert\px u_{\eps_{n},\,\beta_{n}}\vert\vert\px\phi\vert dtdx +\beta_{n}\int_{0}^{\infty}\!\!\!\!\!\int_{\R}\vert\eta'(u_{\eps_{n},\,\beta_{n}})\vert\vert\pxx u_{\eps_{n},\,\beta_{n}}\vert\vert\px\phi\vert dtdx\\
&\qquad\quad +\beta_{n}\int_{0}^{\infty}\!\!\!\!\!\int_{\R}\vert\eta''(u_{\eps_{n},\,\beta_{n}})\vert\vert\px u_{\eps_{n},\,\beta_{n}}\pxx u_{\eps_{n},\,\beta_{n}}\vert\vert\phi\vert dtdx\\
&\qquad\le  \eps_{n} \norm{\eta'}_{L^{\infty}(\R)}\norm{\px u_{\eps_{n},\,\beta_{n}}}_{L^2(supp(\px\phi))}\norm{\px\phi}_{L^2(supp(\px\phi))}\\
&\qquad\quad+ \beta_{n} \norm{\eta'}_{L^{\infty}(\R)}\norm{\pxx u_{\eps_{n},\,\beta_{n}}}_{L^2(supp(\px\phi))}\norm{\px\phi}_{L^2(supp(\px\phi))}\\
&\qquad\quad +\beta_{n} \norm{\eta''}_{L^{\infty}(\R)}\norm{\phi}_{L^{\infty}(\R)}\norm{\px u_{\eps_{n},\,\beta_{n}}\pxx u_{\eps_{n},\,\beta_{n}}}_{L^1(supp(\px\phi))}\\
&\qquad\le  \eps_{n} \norm{\eta'}_{L^{\infty}(\R)}\norm{\px u_{\eps_{n},\,\beta_{n}}}_{L^2((0,T)\times\R)}\norm{\px\phi}_{L^2((0,T)\times\R)}\\
&\qquad\quad+ \beta_{n} \norm{\eta'}_{L^{\infty}(\R)}\norm{\pxx u_{\eps_{n},\,\beta_{n}}}_{L^2((0,T)\times\R)}\norm{\px\phi}_{L^2((0,T)\times\R)}\\
&\qquad\quad+\beta_{n} \norm{\eta''}_{L^{\infty}(\R)}\norm{\phi}_{L^{\infty}(\R^{+}\times\R)}\norm{\px u_{\eps_{n},\,\beta_{n}}\pxx u_{\eps_{n},\,\beta_{n}}}_{L^1((0,T)\times\R)}.
\end{align*}
\eqref{eq:u-entropy-solution} follows from \eqref{eq:beta-eps-1}, \eqref{eq:con3}, \eqref{eq:con4}, Lemmas \ref{lm:l2-u} and \ref{lm:501}.
\end{proof}
\begin{proof}[Proof of Theorem \ref{th:main}]
Theorem \ref{th:main} follows from Lemmas \ref{lm:dist-solution}, and \ref{lm:entropy-solution}.
\end{proof}

\end{document}